\numberwithin{equation}{section}
\DeclareMathOperator{\Cone}{Cone}
\DeclareMathOperator{\Gr}{Gr}
\newcommand{\C}{{\mathbb{C}}}
\newcommand{\R}{{\mathbb{R}}}
\newcommand{\Z}{{\mathbb{Z}}}
\renewcommand{\P}{\mathrm{P}}
\DeclareMathOperator{\flag}{Fl}
\newcommand{\hati}[1]{\textcolor{gray}{{\hat{s}_{{#1}}}}}
\theoremstyle{plain}
\newtheorem{theorem}{Theorem}[section]
\newtheorem{lemma}[theorem]{Lemma}
\newtheorem{proposition}[theorem]{Proposition}
\newtheorem{corollary}[theorem]{Corollary}
\theoremstyle{definition}
\newtheorem{example}[theorem]{Example}
\theoremstyle{remark}
\newtheorem{remark}[theorem]{Remark}
\newcommand{\vred}{\textcolor{red}{|}}
\begin{document}

\author{Shin-young Kim}
\address[Shin-young Kim]{Basic Science Research Institute, Ewha Womans University, Seoul 03760, Republic of Korea}
\email{shinyoung.kim@ewha.ac.kr}

\author{Eunjeong Lee}
\address[Eunjeong Lee]{Department of Mathematics,
	Chungbuk National University,
	Cheongju 28644, Republic of Korea}
\email{eunjeong.lee@chungbuk.ac.kr}

\thanks{Lee is the corresponding author.\\
Lee was supported by the research grant of the Chungbuk National University in 2022. The first author Kim was supported by Basic Science Research Program through the National Research Foundation of Korea(NRF) funded by the Ministry of Education(2021R1A6A1A10039823).
}

\title{Gorenstein toric Schubert varieties in Grassmannians}

\date{\today}

\subjclass[2020]{Primary: 14M25, 14M15, 14J45; Secondary: 05A05}

\keywords{Schubert varieties, toric varieties, Grassmannians, Fano varieties}

\begin{abstract} 
A partial flag variety is a smooth projective homogeneous variety admitting an action of a maximal torus $T$.
Schubert varieties are $T$-invariant subvarieties of the partial flag varieties.
We study toric Schubert varieties in Grassmannian varieties with respect to the action of the torus $T$. Indeed, we present an explicit description of the fan of a Gorenstein toric Schubert variety in a Grassmannian, and we prove that any Gorenstein toric Schubert variety in a Grassmannian variety is Fano.
\end{abstract}

\maketitle

\setcounter{tocdepth}{1} 
\tableofcontents

\section{Introduction} 
Let $G$ be a semisimple Lie group over $\C$, $T$ a maximal torus of $G$, and $B$ a Borel subgroup of $G$ containing $T$. Let $P$ a parabolic subgroup which is a closed subgroup containing the Borel subgroup $B$. The homogeneous space $G/P$ is a smooth projective algebraic variety, called a \emph{\textup{(}partial\textup{)} flag variety} or a rational homgeneous manifold. The natural $G$-action on a partial flag variety $G/P$ inherits an action of the maximal torus $T$ which is defined by the left multiplication of $T$. There are finitely many $T$-fixed points which are indexed by the set~$W^P = W/W_P$ of minimal length left coset representatives, where $W$ is the Weyl group of $G$ and $W_P$ is the subgroup of $W$ determined by $P$. Schubert varieties are subvarieties of $G/P$ indexed by $W^P$. Indeed, for $w \in W^P$, the \emph{Schubert variety} $X_w$ is defined by the $B$-orbit closure at the $T$-fixed point $wP$,
$$X_w = \overline{B wP/P} \subset G/P.$$
The geometry of flag varieties and Schubert varieties provides interesting avenues connecting the geometry, combinatorics, and representation theory as exhibited in Schubert calculus (see, for example, \cite{Brion_Lecture, Fulton97Young}).

A Schubert variety $X_w$ inherits the $T$-action from that on $G/P$. We say that a Schubert variety~$X_w$ is \emph{toric} if it is a toric variety with respect to the $T$-action. 
When we choose a parabolic subgroup as a Borel subgroup $B$, that is when we consider $G/B$, there are some known results on toric Schubert varieties in $G/B$. For example, it is known from~\cite{Karu13Schubert} that a Schubert variety $X_w$ in $G/B$ is toric if and only if $w$ consists of distinct simple reflections. The description of the fan of a toric Schubert variety in $G/B$ is given in~\cite{Fan98} (also, see~\cite{GK94Bott}). 

In this paper, we study toric Schubert varieties in a Grassmannian $\Gr(d,n)$ with $G=\rm{SL}_n (\mathbb C)$. 
Recently, Hodges and Lakshmibai~\cite{HodgesLakshmibai22} provide a classification of spherical Schubert varieties in the Grassmannian $\Gr(d,n)$, and as a corollary, 
one can determine whether a Schubert variety $X_w$ in the Grassmannian $\Gr(d,n)$ is toric in terms of~$w$. Given $w \in W^P$, associating a partition $\lambda_w \colonequals (\lambda_1,\dots,\lambda_d)$ where $\lambda_i = w(d-i+1)-(d-i+1)$, we reinterpret their expression in terms of \emph{partitions}. More precisely, the Schubert variety $X_w \subset \Gr(d,n)$ is toric with respect to the action of the maximal torus $T$ if and only if the corresponding partition $\lambda_w$ is a hook-shaped partition in the $d \times (n-d)$ rectangle (see Proposition~\ref{prop_classify_toric_Schubert}).

We notice that not every toric Schubert variety in a Grassmannian $\Gr(d,n)$ is smooth or Gorenstein. Considering the known results~\cite{BL20Singular, WooYong06Gorenstein} on the smoothness and the Gorenstein condition for Grassmannian Schubert varieties, we can decide when a given toric Schubert variety $X_w$ is smooth or Gorenstein in terms of the partition~$\lambda_w$ (see Lemmas~\ref{prop_toric_smoothness} and~\ref{prop_toric_Gorenstein}). 
As the main result, we prove that the anticanonical divisor of a Gorenstein toric Schubert variety in Grassmannian is ample. 
\begin{theorem}\label{thm_intro_Fano}
Any Gorenstein toric Schubert variety in a Grassmannian is Fano.
\end{theorem}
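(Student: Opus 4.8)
The plan is to combine the explicit description of the fan $\Sigma_w$ of a Gorenstein toric Schubert variety $X_w \subseteq \Gr(d,n)$ with the toric criterion for the Fano property. Since a Schubert variety is normal and projective, $X_w$ is a complete normal toric variety, so it is Fano precisely when $-K_{X_w}$ is an ample Cartier divisor; Cartierness of $-K_{X_w} = \sum_{\rho \in \Sigma_w(1)} D_\rho$ is exactly the Gorenstein hypothesis recorded in Lemma~\ref{prop_toric_Gorenstein}, so the real content is ampleness. By the standard ampleness criterion for toric varieties, this is equivalent to strict convexity of the support function $\varphi = \varphi_{-K_{X_w}}$: for every maximal cone $\sigma \in \Sigma_w$ there must exist $m_\sigma \in M$ with $\langle m_\sigma, u_\rho \rangle = -1$ for every ray $\rho$ of $\sigma$, and $\langle m_\sigma, u_\rho \rangle > -1$ --- equivalently $\langle m_\sigma, u_\rho \rangle \ge 0$, since everything is integral --- for every ray $\rho$ of $\Sigma_w$ not contained in $\sigma$.

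Accordingly, I would first specialize the classification of Gorenstein hooks in Lemma~\ref{prop_toric_Gorenstein} and read off, from the explicit fan, the primitive ray generators $u_1, \dots, u_k$ of $\Sigma_w$ together with its maximal cones. For each maximal cone $\sigma$ the Gorenstein hypothesis already guarantees that the functional $m_\sigma$ cutting out $-K_{X_w}$ over the affine chart $U_\sigma$ lies in $M$; using the explicit coordinates of the rays I would write each $m_\sigma$ down, and then evaluate $\langle m_\sigma, u_\rho\rangle$ on the rays $\rho \notin \sigma$, checking the inequality $\langle m_\sigma, u_\rho\rangle \ge 0$ in each case. Equivalently --- and this is probably the cleanest packaging --- one shows that the dual polytope $P = \{ m \in M_{\R} : \langle m, u_\rho \rangle \ge -1 \text{ for all } \rho \in \Sigma_w(1)\}$ is a lattice polytope whose normal fan is $\Sigma_w$; this says exactly that $\Sigma_w$ is the face fan of a reflexive polytope, i.e.\ that $X_w$ is Gorenstein Fano.

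I expect the main obstacle to be combinatorial rather than conceptual: one must track the incidence structure between the maximal cones and the rays of $\Sigma_w$ carefully enough that the finitely many inequalities $\langle m_\sigma, u_\rho\rangle \ge 0$ can be checked uniformly, presumably after isolating the degenerate hooks (a single row or a single column), where $X_w$ is a projective space and the statement is immediate, from the genuine hooks $\lambda_w = (a, 1^b)$ with $a \ge 2$ and $b \ge 1$ that are permitted by Lemma~\ref{prop_toric_Gorenstein}. A secondary point is the preliminary verification that the described fan is complete and that $-K_{X_w}$ is Cartier exactly for the hooks of that lemma; granting these, the explicit local data $\{m_\sigma\}$ yields strict convexity of $\varphi_{-K_{X_w}}$, hence ampleness of $-K_{X_w}$, and therefore $X_w$ is Fano.
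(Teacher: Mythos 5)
Your plan is correct and is essentially the paper's proof: dispose of the smooth hooks as projective spaces, reduce to the genuine hook $\lambda_w=(d,1^{d-1})$ in $\Gr(d,2d)$, and verify ampleness of $-K_{X_w}$ via the toric criterion $\langle m_\sigma,u_\rho\rangle>-1$ for $\rho\notin\sigma(1)$ using the explicit fan. The only cosmetic difference is that the paper never writes down the $m_\sigma$ explicitly; it instead expresses each off-cone ray generator as a negative sum of ray generators of $\sigma$, which makes the inequality immediate from the Cartier data.
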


To obtain the above theorem, we use the projection $\pi \colon G/B \to G/P$ induced by the inclusion $B \subset P$. Moreover, using the description of the fan of the toric Schubert variety in $G/B$, we obtain the fan of a Gorenstein toric Schubert variety in a Grassmannian in Proposition~\ref{prop_fan_in_Gr}. We notice that any smooth toric Schubert variety in $\Gr(d,n)$ is a projective space while any toric Schubert variety in $G/B$ is smooth.

The organization of this paper is given as follows. 
In Section~\ref{section_preliminaries}, we recall background on flag varieties and its Schubert varieties, especially for a Grassmannian $\Gr(d,n)$. In Section~\ref{sec_toric_Schubert_in_Gr}, we provide a description of toric Schubert varieties in $\Gr(d,n)$ in terms of the partition~$\lambda_w$. Moreover, we recall the fan of toric Schubert varieties in $G/B$ and relate it with the fan of toric Schubert varieities in $G/P$. In Section~\ref{section_Gorenstein_toric_Schubert}, we study Fano conditions on Gorenstein toric Schubert varieties, and we prove Theorem~\ref{thm_intro_Fano}.

\section{Preliminaries: Schubert varieties}\label{section_preliminaries}
In this section, we recall the flag varieties and Schubert varieties from~\cite{Brion_Lecture, BL20Singular}. Also, we consider the geometry of Grassmannian Schubert varieties, such as smoothness, Gorenstein conditions, and isomorphism classifications by recalling from~\cite{BL20Singular}, \cite{WooYong06Gorenstein}, and \cite{TX23isomGrassmannianSchubert}, respectively. 

Let $G$ be a semisimple Lie group, $T$ a maximal torus, and $B$ a Borel subgroup. Let $P$ be a parabolic subgroup which is a closed subgroup of $G$ containing the Borel subgroup $B$. Let $\Phi$ be the root system of $(G,T)$, $\Phi^+$ the set of positive roots, and $\Delta$ the set of simple roots. Then there is a bijective correspondence between the set of parabolic subgroups of $G$ and the set of subsets of~$\Delta$. Indeed, for $J \subset \Delta$, we denote by $\Phi_J^+$ the subset of $\Phi$ consisting of positive linear combination of simple roots in $J$. Then there exist a Parabolic subgroup $P_J$ with the Levi-decomposition $P_J=L_JU_J$ such that $\alpha \in \Phi_J^+$ are the characters of $L_J$. Conversely, if a parabolic subgroup $P$ is given, we can choose the set of simple root of $G$ whose restriction to $P$ is a character of $P$. We note that the parabolic subgroup $P_{\emptyset}$ is the Borel subgroup $B$.

In this paper, we concentrate on $G/P$ in the case when $G = \textrm{SL}_{n}(\C)$, which is a simple Lie group of type $A_{n-1}$. For a parabolic subgroup $P$, the homogeneous space $G/P$ is a smooth projective algebraic variety, called a \emph{\textup{(}partial\textup{)} flag variety}. 
When  $P = B$, the flag variety can be identified with the set of \emph{full flags} in $\C^n$:
\[
G/B \cong \{ V_{\bullet} = (\{0\} \subsetneq V_1 \subsetneq V_2 \subsetneq \cdots \subsetneq V_{n-1} \subsetneq \C^n) \mid \dim_{\C} V_i = i \quad \text{ for all } i =1,\dots,n-1\}.
\]
Moreover, when $P = P_d$ is a maximal parabolic subgroup corresponds to the set~$\Delta \setminus \{\alpha_d\}$, then the flag variety can be identified with the set of vector subspaces of dimension $d$ in~$\C^n$, called the \emph{Grassmannian variety} $\Gr(d,n)$:
\[
G/P_d \cong \Gr(d,n) := \{V \subset \C^n \mid \dim_{\C}V = d\}.
\]

Let $G/P$ be a flag variety. The left multiplication of $T$ on $G$ induces an action of $T$ on $G/P$. 
There is a bijective correspondence between the set of $T$-fixed points in $G/P$ and $W/W_{P}$, where $W$ is the Weyl group of~$G$ and $W_{P} = \langle s_{\alpha} \mid \alpha \in J, P=P_J \rangle$ (see, for example,~\cite[Section~2.6]{BL20Singular} or \cite[Proposition~1.1.1]{Brion_Lecture}). 
We notice that the Weyl group $W$ is generated by the elements $s_i \colonequals s_{\alpha_i}$ associated to the simple roots $\alpha_i$. We call $s_i$ a \emph{simple reflection} or a \emph{simple transposition}. Each element $w \in W$ can be written as a product of generators $w = s_{i_1} s_{i_2} \cdots s_{i_m}$. 
The \emph{length}~$\ell(w)$ of $w$ is the minimal number $m$ among all such expressions for $w$, and the word $s_{i_1} \cdots s_{i_m}$ is called a \emph{reduced word} (or \emph{reduced decomposition}) for~$w$. 
Let $W^P:=W/W_{P}$ be the set of minimal length coset representatives. 
For each $w \in W^P$, we denote by $w P \in G/P$ the corresponding fixed point. 
We have the Bruhat decomposition of $G/P$: 
\[
G/P = \bigsqcup_{w \in W^P} BwP/P,
\]
where $BwP/P$ is an affine cell of dimension $\ell(w)$, which is called a \emph{Bruhat cell}.

For $w \in W^P$, the \emph{Schubert variety} $X_w$ is defined to be the $B$-orbit closure $\overline{BwP/P}$ at the $T$-fixed point $wP$.
Since $T \subset B$, the Schubert variety $X_w$ is $T$-stable with respect to the left multiplication. Moreover, there is a bijective correspondence between the set of $T$-fixed points in $X_w$ and the set $\{v \in W^P \mid v \leq w\}$ (cf.~\cite[Section~1.2]{Brion_Lecture}).  Here, we compare two minimal length representatives using the \emph{Burhat order}, that is, for $w$ with a reduced expression  $w = s_{i_1} \cdots s_{i_m}$, we have $u \leq w$ if and only if there exists a reduced expression 
\[
u = s_{i_{j_1}} \cdots s_{i_{j_k}} \quad 1 \leq j_1 < \cdots < j_k \leq m.
\]

When $P=P_d$, the set $W^P$ consists of the identity and permutations having one descent at $d$.
\[
W^P = \{w \in \mathfrak{S}_n \mid 
w(1)<w(2)<\dots<w(d), \quad 
w(d+1)<w(d+2)<\dots<w(n)\}.
\]
The permutations of a Grassmannian variety are called \emph{Grassmannian permutations}. 
Throughout the paper, to denote a permutation, we will use both a reduced expression and the one-line notation:
\[
w = w(1) w(2) \ \cdots \ w(n).
\] 
In the case when we would like to emphasis the place where the descent appears, we will put  $\vred$. 
\begin{example}\label{example_Gr24_permutations}
Suppose that $n = 4$ and $d = 2$. Then 
\[
\begin{split}
W^P &= \{ w\in \mathfrak{S}_4\mid w(1)<w(2),\quad w(3) <w(4)\} \\
&=\{1234, 13\vred24, 14\vred23, 23\vred14, 24\vred13, 34\vred12\}.
\end{split}
\]
Considering the lengths of permutations in $W^P$, we obtain 
\[
G/P_2 = \Gr(2,4) = \C^0 \sqcup \C^1 \sqcup \C^2 \sqcup \C^2 \sqcup \C^3 \sqcup \C^4.
\]
\end{example}

To describe the smoothness condition on the Schubert varieties in Grassmannian $\Gr(d,n)$, we recall the Young diagram representation from~\cite[\S 9.3]{BL20Singular}. Given $w \in W^P$, associate a partition $\lambda_w \colonequals (\lambda_1,\dots,\lambda_d)$, where $\lambda_i = w(d-i+1) - (d-i+1)$. 
We say that a partition $\lambda$ consists of $r$ rectangles $p_1 \times q_1,\dots,p_r \times q_r$ for $p_i,q_i \geq 1$ if 
\[
\lambda = (p_1^{q_1},\dots,p_r^{q_r}) = (\underbrace{p_1,\dots,p_1}_{q_1 \text{ times}},\dots,\underbrace{p_r,\dots,p_r}_{q_r \text{ times}}). 
\]
We notice that $v \leq w$ in Bruhat order if and only if $\lambda_v \leq \lambda_w$. Here, we use the lexicographic order to compare partitions, that is, for partitions $\lambda =  (\lambda_1,\dots,\lambda_d)$ and $\mu = (\mu_1,\dots,\mu_d)$, we define~$\lambda \leq \mu$ if $\lambda_i \leq \mu_i$ for all $i=1,\dots,d$. 

Moreover, Gorenstein Schubert varieties also can be distinguished considering the corresponding Young diagrams as in~\cite{WooYong06Gorenstein}. 
We notice that for a Grassmannain permutation $w$ for $\Gr(d,n)$, the corresponding partition $\lambda$ sitting inside a $d \times (n-d)$ rectangle. Considering the lower border of partition as a \emph{lattice path} from the lower left-hand corner to the upper right-hand corner of the rectangle $d \times (n-d)$, a \emph{corner} is a lattice point on this path with lattice points of path both directly above and directly to the left of it. For instance, for $w = 2457136$ in $\Gr(4,7)$, we have $\lambda_w = (3,2,2,1)$. In the following diagram, we depict the lattice path in red and corners in blue.
\begin{center}
\begin{tikzpicture}[inner sep=0in,outer sep=0in]
	\node (n) {
		\begin{varwidth}{5cm}{
				\begin{ytableau}
					*(gray!30)~ & *(gray!30)~&*(gray!30)~ ~ \\
					*(gray!30)~ & *(gray!30)~& ~ \\
					*(gray!30)~ & *(gray!30)~& ~ \\
					*(gray!30)~ & ~& ~ \\
	\end{ytableau}}\end{varwidth}};
	\draw[thick,red] (n.south west)--
	([xshift=0.545cm]n.south west) 	node[draw=none, shape = circle,fill = blue, scale= 3] {} --
	([xshift=0.545cm, yshift=0.545cm]n.south west)
	--
	([xshift=1.09 cm, yshift = 0.545cm]n.south west) 	node[draw=none, shape = circle,fill = blue, scale= 3] {}--
	([xshift=-0.545cm, yshift = -0.545cm ]n.north east)
	 --
	([yshift=-0.545cm]n.north east) 	node[draw=none, shape = circle,fill = blue, scale= 3] {}--
	(n.north east);
\end{tikzpicture}
\end{center}

\begin{proposition}[{see \cite[Corollary~9.3.3]{BL20Singular} for the smoothness; see \cite{WooYong06Gorenstein} for the Gorenstein condition; see~\cite{TX23isomGrassmannianSchubert} for the isomorphism classification}]\label{prop_smoothness}
	For a Grassmannian Schubert variety $X_w$, the following statements hold.
\begin{enumerate}
\item 	$X_w$ is smooth if and only if $\lambda_w$ is the empty partition or consists of one rectangle. 
\item $X_w$ is Gorenstein if and only if all of the corners of $\lambda_w$ lie on the same antidiagonal. 
\item Let $X_{v}$ be Schubert variety which might be in different Grasmannian. Then $X_w$ and $X_v$ are isomorphic as varieties if and only if $\lambda_w$ and $\lambda_v$ are identical up to a transposition.
\end{enumerate}
\end{proposition}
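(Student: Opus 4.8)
The plan is to obtain all three statements as translations of known results into the language of the partition $\lambda_w$; the real work is fixing the dictionary between Grassmannian permutations and partitions and checking that the conventions match. The dictionary I would use is: $w\mapsto\lambda_w$ is a bijection from $W^{P_d}$ onto the set of partitions fitting inside the $d\times(n-d)$ rectangle; Bruhat order corresponds to containment of Young diagrams; $\dim X_w=\ell(w)=|\lambda_w|$; the $T$-fixed points of $X_w$ are indexed by the partitions $\mu\subseteq\lambda_w$; and the irreducible boundary divisors of $X_w$ (its codimension-one Schubert subvarieties) correspond precisely to the partitions obtained from $\lambda_w$ by deleting one removable corner box.

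For part (1) I would first recall the standard smoothness criterion for Grassmannian Schubert varieties. One clean route is the Carrell--Peterson criterion: in type $A$, $X_w$ is smooth if and only if it is rationally smooth, if and only if its Poincar\'e polynomial $\sum_{\mu\subseteq\lambda_w}t^{|\mu|}$ is palindromic. It then remains to check the purely combinatorial fact that $\sum_{\mu\subseteq\lambda}t^{|\mu|}$ is palindromic exactly when $\lambda$ is empty or a single rectangle $p\times q$: for a rectangle this generating function is the Gaussian binomial $\binom{p+q}{q}_{t}$, which is palindromic, whereas any non-rectangular partition has at least two removable corners and so already fails palindromy (compare the coefficients of $t$ and of $t^{|\lambda|-1}$). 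Alternatively, one checks directly that $X_w\cong\Gr(q,p+q)$ when $\lambda_w=p\times q$ (hence smooth), and exhibits a singular $T$-fixed point—for instance the one indexed by the empty partition, via a tangent-space dimension count—whenever $\lambda_w$ is not a rectangle. Either way this reproduces \cite[Corollary~9.3.3]{BL20Singular}.

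Part (2) is where I expect the main obstacle. A Schubert variety is always normal and Cohen--Macaulay, so $X_w$ is Gorenstein if and only if its canonical Weil divisor is Cartier. I would compute the anticanonical divisor of $X_w$ explicitly as a non-negative combination $\sum_c a_c D_c$ of the boundary Schubert divisors $D_c$, indexed by the removable corners $c$ of $\lambda_w$, where the coefficient $a_c$ is governed by an antidiagonal coordinate of $c$ inside the ambient rectangle; the smooth case $\lambda_w=p\times q$, where $-K_{X_w}=(p+q)D$ for the unique boundary divisor, is a useful normalization. Since $X_w$ is smooth in codimension one, Cartierness of this divisor only needs to be tested at the singular $T$-fixed points, and carrying this out—either via the local structure of $X_w$ at each such fixed point $\mu\subseteq\lambda_w$, or by specializing Woo--Yong's interval-pattern-avoidance criterion for Gorenstein Schubert varieties to the Grassmannian case—should collapse to the stated condition that all corners of $\lambda_w$ lie on a common antidiagonal. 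The delicate points are pinning down the exact coefficient $a_c$ of each corner divisor and identifying the precise local model at each fixed point that imposes a constraint; this is the content of \cite{WooYong06Gorenstein} in the Grassmannian setting.

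For part (3), the easy direction is geometric: transposing $\lambda_w$ corresponds to the Grassmann duality isomorphism $\Gr(d,n)\cong\Gr(n-d,n)$, $V\mapsto V^{\perp}$ (equivalently $V\mapsto(\C^{n}/V)^{\ast}$), which carries $X_w$ to the Schubert variety of the transposed partition; moreover a partition that does not meet the right-hand or bottom edge of its rectangle already defines an isomorphic Schubert variety inside a smaller Grassmannian, so the only invariant visible a priori is $\lambda_w$ as an abstract partition up to transpose. The substance is the converse—that distinct partitions (up to transpose) yield non-isomorphic varieties—which I would derive from \cite{TX23isomGrassmannianSchubert} by reconstructing the partition from intrinsic invariants of $X_w$ such as its Picard group, its automorphism group, and the poset of its Schubert subvarieties, equivalently the combinatorial type of its singular locus.
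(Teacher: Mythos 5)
The paper does not prove this proposition at all: it is stated as a recollection of known results and disposed of entirely by the citations to \cite{BL20Singular}, \cite{WooYong06Gorenstein}, and \cite{TX23isomGrassmannianSchubert}, so there is no in-paper argument to compare against. Measured on its own terms, your proposal is sound where it is actually self-contained and honest where it is not. For (1), the route via Deodhar's ``smooth $=$ rationally smooth'' in type $A$ plus the Carrell--Peterson palindromy test is correct and genuinely complete: the Poincar\'e polynomial is $\sum_{\mu\subseteq\lambda_w}t^{|\mu|}$, for a rectangle this is the Gaussian binomial $\binom{p+q}{q}_t$ (palindromic), and for a non-rectangle the coefficient of $t^{|\lambda_w|-1}$ counts removable corners, hence is at least $2$ while the coefficient of $t$ is $1$; this is a legitimate alternative to simply quoting \cite[Corollary~9.3.3]{BL20Singular}. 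For (2) and (3), however, what you have written is a proof \emph{plan} whose decisive steps (the exact coefficients of the boundary divisors in $-K_{X_w}$ and the local Cartierness check; the reconstruction of $\lambda_w$ up to transpose from intrinsic invariants) are explicitly deferred back to \cite{WooYong06Gorenstein} and \cite{TX23isomGrassmannianSchubert} --- so in substance you are doing the same thing the paper does, namely citing those results, just with more scaffolding around the citation. Two small points worth tightening if you ever flesh this out: the reduction of Cartierness to the $T$-fixed points in (2) needs the observation that the canonical divisor can be chosen $B$-stable, so the non-Cartier locus, being closed and $B$-stable, must contain a fixed point if nonempty; and in (3) the easy direction also needs the remark (which you do make) that a partition not touching the edges of its rectangle embeds the same Schubert variety in a smaller Grassmannian, so that comparing varieties across different $\Gr(d,n)$ is legitimate.
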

We demonstrate the above proposition in the following example for $\Gr(2,4)$. 
\begin{example}
	Suppose that $n = 4$ and $d = 2$. Following Example~\ref{example_Gr24_permutations}, we have six permutations in $W^P$. The corresponding partitions and the lattice paths are given in the following.
	\begin{center}
		\begin{tabular}{c|cccccc}
			\toprule 
			$w$ & $1234$ & $1324$ & $1423$ & $2314$ & $2413$ & $3412$ \\
			\midrule 
			$\lambda_w$ & $\emptyset$ & $(1)$ & $(2)$ & $(1^2)$ & $(2,1)$ & $(2^2)$ \\[1em]
			& \begin{tikzpicture}[inner sep=0in,outer sep=0in]
				\node (n) {
					\begin{varwidth}{5cm}{
							\begin{ytableau}
								~ & ~ \\
								~ & ~
				\end{ytableau}}\end{varwidth}};
				\draw[thick,red] (n.south west)--
				([yshift=1.09cm]n.south west) 
					 --
				([xshift= -1.09cm]n.north east) --
				(n.north east);
			\end{tikzpicture}
			
			& \begin{tikzpicture}[inner sep=0in,outer sep=0in]
				\node (n) {
					\begin{varwidth}{5cm}{
							\begin{ytableau}
								*(gray!30)~ & ~ \\
								~ & ~
				\end{ytableau}}\end{varwidth}};
				\draw[thick,red] (n.south west)--
				([yshift=0.545cm] n.south west)--
				([xshift=0.545cm, yshift=0.545cm] n.south west) node[draw=none, shape = circle,fill = blue, scale= 3] {}--
				([xshift=-0.545cm] n.north east)--
				(n.north east);
			\end{tikzpicture}
			& \begin{tikzpicture}[inner sep=0in,outer sep=0in]
				\node (n) {
					\begin{varwidth}{5cm}{
							\begin{ytableau}
								*(gray!30)~ & *(gray!30)~ \\
								~ & ~
				\end{ytableau}}\end{varwidth}};
				\draw[thick,red] (n.south west)--
				([yshift=0.545cm] n.south west) --
				([xshift=0.545cm, yshift=0.545cm] n.south west)--
				([yshift=-0.545cm] n.north east) node[draw=none, shape = circle,fill = blue, scale= 3] {}--
				(n.north east);
			\end{tikzpicture}
			& \begin{tikzpicture}[inner sep=0in,outer sep=0in]
				\node (n) {
					\begin{varwidth}{5cm}{
							\begin{ytableau}
								*(gray!30)~ & ~ \\
								*(gray!30)~ & ~
				\end{ytableau}}\end{varwidth}};
				\draw[thick,red] (n.south west)--
				([xshift=0.545cm] n.south west)  node[draw=none, shape = circle,fill = blue, scale= 3] {} --
				([xshift=0.545cm, yshift=0.545cm] n.south west)--
				([xshift=-0.545cm] n.north east)--
				(n.north east);
			\end{tikzpicture}
			& \begin{tikzpicture}[inner sep=0in,outer sep=0in]
				\node (n) {
					\begin{varwidth}{5cm}{
							\begin{ytableau}
								*(gray!30)~ & *(gray!30) ~ \\
								*(gray!30)~ & ~
				\end{ytableau}}\end{varwidth}};
				\draw[thick,red] (n.south west)--
				([xshift=0.545cm] n.south west)  node[draw=none, shape = circle,fill = blue, scale= 3] {} --
				([xshift=0.545cm, yshift=0.545cm] n.south west) --
				([yshift=-0.545cm] n.north east) node[draw=none, shape = circle,fill = blue, scale= 3] {} --
				(n.north east);
			\end{tikzpicture}
			& \begin{tikzpicture}[inner sep=0in,outer sep=0in]
				\node (n) {
					\begin{varwidth}{5cm}{
							\begin{ytableau}
								*(gray!30)~ & *(gray!30) ~ \\
								*(gray!30)~ & *(gray!30) ~
				\end{ytableau}}\end{varwidth}};
				\draw[thick,red] (n.south west)--
				([xshift=0.545cm] n.south west)--
				([xshift=1.09cm] n.south west) node[draw=none, shape = circle,fill = blue, scale= 3] {}--
				([yshift=-0.545cm] n.north east)  --
				(n.north east);
			\end{tikzpicture}	\\
		smoothness & Yes & Yes & Yes & Yes & No & Yes \\
		Gorenstein & Yes & Yes  & Yes  & Yes  & Yes  & Yes \\
			\bottomrule 
		\end{tabular}
	\end{center}
	Applying Proposition~\ref{prop_smoothness}, the Schubert variety $X_{2413}$ is singular and the other Schubert varieties are all smooth in $\Gr(2,4)$. 
	Moreover, all Scubert varieties are Gorenstein. Finally, two Schubert varieties $X_{1423}$ and $X_{2314}$ are isomorphic since the corresponding Young diagrams are identical up to a transposition.
\end{example}

\section{Description of toric Schubert varieties in Grassmannians}\label{sec_toric_Schubert_in_Gr}

In this section, we consider \emph{toric} Schubert varieties in a Grassmannian $G/P_d=\Gr(d,n)$. To be more precise, we consider characterization of the toric Schubert varieties by recalling~\cite{HodgesLakshmibai22} and describe the fan of the toric Schubert varieties in $G/P_d$ by considering the corresponding toric Schubert varieties in the full flag variety $G/B$.

Throughout this section, we denote by $W^P$ the set of Grassmannian permutations in $\mathfrak{S}_n$ having one descent at $d$, that is, $W^P$ parametrizes Bruhat cells in the Grassmannian $\Gr(d,n)$. 
We first consider the classification on toric Schubert varieties $X_w$ in $\Gr(d,n)$ in terms of the partitions $\lambda_w$ and one-line notations of $w$. To do so, we need to introduce hook-shaped partitions. A partition~$\lambda$ is called \emph{hook-shaped} if it is of the form $(x, 1^y)$ for some $x \geq 1$ and $y\geq 0$. Here, $1^0$ (that is, when $y=0$) by mean that $1$ appears $0$ times. The number $(x-1)$ is called the \emph{arm-length}, which is the number of boxes in the hook $(x, 1^y)$ to the right of the top left box. Moreover, the number $y$ is called the \emph{leg-length}, which is the number of boxes in the hook $(x,1^y)$ to the below of the top leftmost box.

\begin{proposition}\label{prop_classify_toric_Schubert}
	For $w \in W^P$, let $X_w$ be the Schubert variety in the Grassmannian $\Gr(d,n)$. Then, the following statements are equivalent.
	\begin{enumerate}
		\item The Schubert variety $X_w$ is a toric variety with respect to the action of the maximal torus~$T$. 
		\item Either $w = e$ or a reduced decomposition of $w$ is 
\begin{equation}\label{eq_w_reduced}
	w = s_{d+x-1} s_{d+x-2} \cdots s_{d+1} s_{d-y} s_{d-y+1} \cdots s_d
\end{equation}
for $1 \leq x \leq n-d$ and $0 \leq y \leq d-1$. 
Here, if $x = 1$, then we set $s_{d+x-1} s_{d+x-2} \cdots s_{d+1} = e$.	
		\item The corresponding partition $\lambda_w$ is the empty partition or a hook-shaped partition $(x, 1^y)$ in the $d \times (n-d)$ rectangle. 
	\end{enumerate}
\end{proposition}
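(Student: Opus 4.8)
The plan is to prove the two equivalences $(2)\Leftrightarrow(3)$ and $(1)\Leftrightarrow(3)$ separately. The first is purely combinatorial — translating between the reduced word \eqref{eq_w_reduced} and the partition $\lambda_w$ — while the second is geometric and reduces to a linear-independence question about the $T$-weights on the open Bruhat cell. (The equivalence $(1)\Leftrightarrow(3)$ can alternatively be extracted from the classification of spherical Schubert varieties in $\Gr(d,n)$ of \cite{HodgesLakshmibai22} by rewriting their condition on $w$ in terms of $\lambda_w$; below I sketch the more self-contained route.)

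\emph{Proof of $(2)\Leftrightarrow(3)$.} Starting from \eqref{eq_w_reduced}, set $\sigma\colonequals s_{d-y}s_{d-y+1}\cdots s_d$ and $\tau\colonequals s_{d+x-1}s_{d+x-2}\cdots s_{d+1}$ (with $\tau=e$ if $x=1$), so $w=\tau\sigma$. One checks that $\sigma$ is the cycle sending $d-y\mapsto d-y+1\mapsto\cdots\mapsto d\mapsto d+1\mapsto d-y$ and fixing all other entries, while $\tau$ sends $d+1\mapsto d+x$ and $d+j\mapsto d+j-1$ for $2\le j\le x$ and fixes the rest; composing gives the one-line notation
\[
w = 1\,2\,\cdots\,(d{-}y{-}1)\ (d{-}y{+}1)\,\cdots\,d\ (d{+}x)\ \vred\ (d{-}y)\ (d{+}1)\,\cdots\,(d{+}x{-}1)\ (d{+}x{+}1)\,\cdots\,n,
\]
which is a Grassmannian permutation with descent at $d$. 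Reading off $\lambda_i=w(d-i+1)-(d-i+1)$ yields $\lambda_w=(x,1^y)$, and since the word has $(x-1)+(y+1)=x+y=|\lambda_w|=\ell(w)$ letters it is reduced. As $w\mapsto\lambda_w$ is a bijection from $W^P$ onto the partitions contained in the $d\times(n-d)$ rectangle, and a hook $(x,1^y)$ lies in that rectangle precisely when $1\le x\le n-d$ and $0\le y\le d-1$, every such hook comes from exactly one word of the form \eqref{eq_w_reduced}, and the empty partition from $w=e$. This gives $(2)\Leftrightarrow(3)$.

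\emph{Proof of $(1)\Leftrightarrow(3)$.} The open Bruhat cell $BwP/P\subset X_w$ is a $T$-stable affine space $\cong\C^{\ell(w)}$ on which $T$ acts linearly, with weights the $T$-weights of $T_{wP}X_w\subset T_{wP}\Gr(d,n)$. Writing $S_w=\{w(1),\dots,w(d)\}$ for the $d$-subset indexing $wP$, these weights are $\{\epsilon_a-\epsilon_b : a\notin S_w,\ b\in S_w,\ a<b\}$, a set of cardinality $\ell(w)=|\lambda_w|$. A torus acting linearly on an affine space has a dense orbit if and only if the occurring weights are linearly independent, in which case the closure — here $X_w$, which is normal because Schubert varieties are normal — is a toric variety; if instead there is a dependence, every orbit in the cell has dimension $<\ell(w)=\dim X_w$, so $X_w$ is not toric. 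Hence $X_w$ is toric iff the vectors $\epsilon_a-\epsilon_b$ above are linearly independent, equivalently iff the bipartite graph $\Gamma_w$ on $\{1,\dots,n\}$ with one edge $\{a,b\}$ per such pair is a forest. Now the vertex $w(d-i+1)$, the $i$-th largest element of $S_w$, has degree $w(d-i+1)-(d-i+1)=\lambda_i$ in $\Gamma_w$, so the positive-degree vertices inside $S_w$ are the $r$ parts of $\lambda_w$, those outside $S_w$ number $\lambda_1$, and $\#E(\Gamma_w)=|\lambda_w|$; since $\max S_w=w(d)$ has degree $\lambda_1$ it is adjacent to every positive-degree vertex outside $S_w$, so $\Gamma_w$ with isolated vertices removed is connected on $r+\lambda_1$ vertices. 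Therefore $\Gamma_w$ is a forest $\iff$ it is a tree $\iff|\lambda_w|=r+\lambda_1-1\iff\lambda_2+\cdots+\lambda_r=r-1\iff\lambda_2=\cdots=\lambda_r=1$, i.e.\ $\lambda_w$ is empty ($r=0$) or a hook $(\lambda_1,1^{r-1})$. This proves $(1)\Leftrightarrow(3)$, and the proposition follows.

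I expect the only real friction to be bookkeeping: identifying the exact weight set on the Bruhat cell and the degree sequence of $\Gamma_w$, together with the routine-but-fiddly one-line computation of $w=\tau\sigma$. The conceptual content — ``toric $\iff$ independent cell weights $\iff$ $\Gamma_w$ a forest'' and ``$\Gamma_w$ a forest $\iff$ $\lambda_w$ a hook'' — is short once the setup is fixed; a reader preferring to avoid the tangent-space analysis can instead substitute the spherical classification of \cite{HodgesLakshmibai22} for $(1)\Leftrightarrow(3)$ and carry out only the partition translation.
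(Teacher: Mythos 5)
Your proof is correct, but it takes a genuinely different route from the paper on the main point. The paper derives both equivalences from the Hodges--Lakshmibai classification of spherical Schubert varieties (quoted as Lemma~\ref{lemma_HL_toric}), simply translating the two one-line-notation patterns there into reduced words for $(1)\Leftrightarrow(2)$ and into partitions for $(1)\Leftrightarrow(3)$; it never argues toricness directly. You instead prove $(1)\Leftrightarrow(3)$ from scratch: the open cell $BwP/P$ is a $T$-stable affine space whose weights are $\{\epsilon_a-\epsilon_b : a\notin S_w,\ b\in S_w,\ a<b\}$, a dense $T$-orbit exists iff these are linearly independent, i.e.\ iff the associated bipartite graph is a forest, and your degree count (the $i$-th largest element of $S_w$ has degree $\lambda_i$, the graph minus isolated vertices is connected on $r+\lambda_1$ vertices with $|\lambda_w|$ edges) correctly pins down the forest condition as $\lambda_2=\cdots=\lambda_r=1$, i.e.\ $\lambda_w$ a hook. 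All the ingredients you use are standard and correctly applied --- the weight description of the cell, normality of Schubert varieties to pass from ``dense orbit'' to ``toric,'' and the graphic-matroid criterion --- so the argument stands; it buys independence from \cite{HodgesLakshmibai22} (in fact it reproves the toric case of their classification), at the cost of a longer setup. Your $(2)\Leftrightarrow(3)$ is essentially the same computation the paper does, just organized around $w=\tau\sigma$ and the bijection $w\mapsto\lambda_w$ rather than around the one-line notations of Lemma~\ref{lemma_HL_toric}. One cosmetic point: since $\dim T=n-1$ generally exceeds $\ell(w)$, ``toric with respect to $T$'' must be read as the image of $T$ in the automorphism group having a dense orbit; you use this implicitly (as does the paper), and it is worth a sentence.
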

To provide a proof of Proposition~\ref{prop_classify_toric_Schubert}, we recall from~\cite{HodgesLakshmibai22} the classification of toric Schubert varieties (with respect to the action of $T$) in $\Gr(d,n)$. 
\begin{lemma}[{see~\cite[Collorary~6.0.2]{HodgesLakshmibai22}}]\label{lemma_HL_toric}
	 The Grassmannian Schubert variety $X_w$ is toric if and only if $w$ is the identity or its one-line notation is one of the following forms:
	\begin{enumerate}
		\item $w = 1 \ \cdots \ p \ p+2 \ \cdots \ d \ f \ \vred \ p+1 \ d+1 \ \cdots \ f-1 \ f+1 \cdots n$ for some integers $0\leq p < d-1$ and $d < f \leq n$.
		\item $w = 1 \ \cdots \ d-1 \ f \ \vred \ d \ \cdots f-1 \ f+1 \ \cdots n$ for some integer $d < f \leq n$.
	\end{enumerate}
	Here, we put $\vred$ the place where a descent appears.
\end{lemma}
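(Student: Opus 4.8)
The plan is to prove Proposition~\ref{prop_classify_toric_Schubert} by establishing the three-way equivalence (1) $\Leftrightarrow$ (2) $\Leftrightarrow$ (3), using Lemma~\ref{lemma_HL_toric} as the bridge for the analytic content and leaving the combinatorial translations as direct computations. The natural strategy is to show (1) $\Leftrightarrow$ (2) by matching the two one-line forms in Lemma~\ref{lemma_HL_toric} with the reduced word in \eqref{eq_w_reduced}, and then to show (2) $\Leftrightarrow$ (3) by computing the partition $\lambda_w$ attached to the word in \eqref{eq_w_reduced} and checking it is exactly a hook $(x,1^y)$.

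First I would verify (2) $\Rightarrow$ (1) by applying the simple reflections in \eqref{eq_w_reduced} to the identity one-line notation and reading off the resulting permutation. Writing $w = (s_{d+x-1}\cdots s_{d+1})(s_{d-y}\cdots s_d)$, the right-hand block $s_{d-y}s_{d-y+1}\cdots s_d$ is a cycle that moves the entry originally at position $d-y$ rightward to position $d+1$ (equivalently, it sends the value $d-y$ down past $d-y+1,\dots,d$), while the left-hand block $s_{d+x-1}\cdots s_{d+1}$ further shifts the entry now sitting just past the descent. Carrying out this bookkeeping, one obtains precisely a one-line notation of the shape in Lemma~\ref{lemma_HL_toric}: the case $y = d-1$ (so the leg reaches the top) yields form (2), and the case $y < d-1$ yields form (1) with the correspondence $p = d-1-y$ and $f = d+x$. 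Conversely, reversing this calculation shows every one-line form in Lemma~\ref{lemma_HL_toric} arises from a word of the shape \eqref{eq_w_reduced}, and one must check that the displayed word is in fact \emph{reduced} and lies in $W^P$, i.e.\ that its length equals $\ell(w)$ and the unique descent is at position $d$; this follows because the word has length $(x-1) + (y+1) = x+y$ and one counts that $\lambda_w$ has exactly $x+y$ boxes (an inversion count), so no cancellation occurs. This gives (1) $\Leftrightarrow$ (2).

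Next I would handle (2) $\Leftrightarrow$ (3) by directly computing $\lambda_w$ from the permutation produced above, using $\lambda_i = w(d-i+1) - (d-i+1)$. From the one-line notation one reads that $w$ has entries $1,2,\dots,d-1$ in the first $d-1$ positions except for a single value pushed up to $f = d+x$ at position $d$, which forces $\lambda_1 = (d+x) - d = x$ and $\lambda_2 = \cdots = \lambda_{y+1} = 1$ with the remaining parts zero, exactly the hook $(x, 1^y)$. The constraints $1 \le x \le n-d$ and $0 \le y \le d-1$ are precisely the conditions that the hook fits inside the $d \times (n-d)$ rectangle, and the degenerate conventions ($x=1$ giving an empty left block, $w = e$ giving the empty partition) match the statement. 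The reverse direction is the same computation read backwards: any hook $(x,1^y)$ determines $x$ and $y$, hence the word \eqref{eq_w_reduced}.

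The main obstacle will be the bookkeeping in the correspondence (1) $\Leftrightarrow$ (2): tracking exactly where each entry lands under the two blocks of simple reflections, and correctly matching the parameters $(p,f)$ of Lemma~\ref{lemma_HL_toric} with $(x,y)$ — in particular confirming that the boundary cases ($x=1$, $y=0$, $y=d-1$, and $w=e$) are consistently absorbed into the single form \eqref{eq_w_reduced} and into the two cases of the lemma without gaps or overlaps. Verifying reducedness and the descent-at-$d$ condition is the other point requiring care, though it reduces to a straightforward inversion/length count once the one-line notation is in hand.
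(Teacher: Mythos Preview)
Your proposal does not address the statement in question. Lemma~\ref{lemma_HL_toric} is a result quoted from \cite{HodgesLakshmibai22}; the paper offers no proof of it, and neither do you---you explicitly take it as an input (``using Lemma~\ref{lemma_HL_toric} as the bridge''). What you have written is instead a proof sketch for Proposition~\ref{prop_classify_toric_Schubert}, which is the statement whose proof in the paper immediately follows the lemma. So as a proof of Lemma~\ref{lemma_HL_toric} the proposal is vacuous: it assumes the conclusion.

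If your intent was Proposition~\ref{prop_classify_toric_Schubert}, then your approach is essentially the same as the paper's: both arguments identify the parameters $(p,f)$ of Lemma~\ref{lemma_HL_toric} with $(x,y)$ via $x=f-d$, $y=d-p-1$ and read off the hook $(x,1^y)$ from the one-line notation. The paper organizes the equivalence as $(1)\Leftrightarrow(2)$ and $(1)\Leftrightarrow(3)$ while you do $(1)\Leftrightarrow(2)$ and $(2)\Leftrightarrow(3)$, but the computations are identical. One bookkeeping slip to fix: you write that ``the case $y=d-1$ \dots\ yields form~(2)'' of Lemma~\ref{lemma_HL_toric}, but form~(2) is the permutation fixing $1,\dots,d-1$, i.e.\ the row partition with leg-length zero, so it corresponds to $y=0$; it is $y>0$ (equivalently $0\le p<d-1$) that gives form~(1).
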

\begin{proof}[Proof of Proposition~\ref{prop_classify_toric_Schubert}]
We first consider $(1) \iff (2)$. 
By Lemma~\ref{lemma_HL_toric}, we already know one-line notations of permutations describing toric Schubert varieties. 
For $w$ in the first case in Lemma~\ref{lemma_HL_toric}, we obtain 	\[
\begin{split}
	w &= 1 \ \cdots \ p \ p+2 \ \cdots \ d \ f \ \vred \ p+1 \ d+1 \ \cdots \ f-1 \ f+1 \cdots n \\
	&= s_{f-1} s_{f-2} \cdots s_{d+1} s_{p+1} s_{p+2} \cdots s_d, 
\end{split}
\]
By setting $x = f-d$ and $y = d-p-1$, the numbers $x$ and $y$ vary $0 < x \leq n-d$ and $0 < y \leq d-1$, and moreover, we obtain the reduced decomposition of $w$ in~\eqref{eq_w_reduced}. 
For $w$ in the second case in Lemma~\ref{lemma_HL_toric}, we obtain 
\[
\begin{split}
	w &= 1 \ \cdots \ d-1 \ f \ \vred \ d \ \cdots f-1 \ f+1 \ \cdots n \\
	&= s_{f-1} s_{f-2} \cdots s_d,
\end{split}
\]
By setting $x = f-d$, we obtain the reduced decomposition of $w$ in~\eqref{eq_w_reduced} (with $y=0$). 
This proves $(1) \iff (2)$.

Now we claim $(1) \iff (3)$.
To prove this claim, we consider the one-line notations in Lemma~\ref{lemma_HL_toric} again. If $w = e$, then we obtain $\lambda_w = \emptyset$ and the converse is also true. So, let us assume that $w \neq e$.

If $\lambda_w=(x,1^y)$ is given for $1\leq x \leq n-d$ and $0 \leq y \leq d-1$, then we have $$w(d)=x+d.$$
In addition, if $y=0$ and set $x+d=f$, then 
\begin{eqnarray*}
     w&=&w(1)w(2)\cdots w(d) \vred w(d+1)\cdots w(n) \\
     &=&1 \ \cdots \  d-1 \  f \ \vred \ d \ \cdots \ f-1 \ f+1 \ \cdots \ n  
\end{eqnarray*}
for $d+1 \leq f \leq n$.
If $y \neq 0$, set $x+d=f$ and $p+2=d-y+1$, then $$w(d-a)=1+d-a$$ for $0 \leq a \leq y$, and
\begin{eqnarray*}
     w&=&w(1)w(2)\cdots w(d)\vred w(d+1)\cdots w(n) \\
     &=&1 \ \cdots p \ p+2 \cdots \  f \ \vred \ p+1 \ d+1 \cdots \ f-1 \ f+1 \ \cdots \ n  
\end{eqnarray*}
for $d+1 \leq f \leq n$ and $0 \leq p < d-1$. Hence, by Lemma~\ref{lemma_HL_toric}, $(3)$ implies $(1)$.

We claim that $(1)$ implies $(3)$. For $w$ in the first case in Lemma~\ref{lemma_HL_toric}, by setting $x = f-d$ and $y = d-p-1$,  we obtain 
\begin{eqnarray*}\label{eq_lambda_w_1}
\lambda_w &=& (f-d, d-(d-1),\dots,(p+2)-(p+1))  \\
          &=& (f-d,1^{d-p-1}) \\
          &=& (x, 1^y),
\end{eqnarray*}
where the numbers $x$ and $y$ vary $0 < x \leq n-d$ and $0 < y \leq d-1$.
Specifically, if $f = d+1$, that is $x=1$, then
	\begin{equation*}\label{eq_lambda_w_2}
	\lambda_w = (1^{1+y}) \quad \text{ for } 0 < y \leq d -1.
	\end{equation*}
Accordingly, we obtain \emph{any} nonempty column partition in the $d \times (n-d)$ rectangle, that is, any hook-shaped partition in the $d \times (n-d)$ rectangle whose arm-length is zero. 
Moreover, if $ f > d+1$ and if we set $x = f-d$ and $y = d-p-1$, then we have 
\begin{equation}\label{eq_lambda_w_3}
\lambda_w = (x, 1^{y}) \quad \text{ for } 1 < x\leq n-d,~~ 0 < y\leq d-1.
\end{equation}
Therefore, we obtain \emph{any} hook having at least one arm-length and at least one leg-length.

For $w$ in the second case in Lemma~\ref{lemma_HL_toric}, by setting  $f-d = x$, we obtain 
\begin{equation*}\label{eq_lambda_w_4}
\lambda_w = (x) \quad \text{ for } 0 <  x\leq n-d.
\end{equation*}
Therefore, we obtain \emph{any} nonempty row partition in the $d \times (n-d)$ rectangle, that is, any hook-shaped partition in the $d \times (n-d)$ rectangle whose leg-length is zero.
Hence, the result follows. 
\end{proof}
\begin{remark}
	It is known from~\cite{Karu13Schubert} that a Schubert variety $X_w$ in the full flag variety is toric if and only if a reduced decomposition of $w$ consists of distinct simple reflections. Since the projection map $G/B \to G/P$ induced by the inclusion $B \subset P$ is $T$-equivariant, a Schubert variety $X_w$ in $G/P$ is toric (with respect to $T$) if $w$ consists of distinct simple reflections. 
\end{remark}

To describe the fan of the toric Schubert variety $X_w$, we use the fibration $\pi \colon G/B \to G/P$ induced by the inclusion $B \subset P$. To avoid confusion, we denote by $X_w^B$ the Schubert variety in $G/B$ corresponding to $w$. 
One can see that a reduced word in each case consists of distinct simple reflections by Proposition~\ref{prop_classify_toric_Schubert}. Accordingly, the corresponding toric Schubert variety in the full flag variety is again toric by~\cite{Karu13Schubert}. 

Any toric Schubert variety in $G/B$ is smooth and its fan can be described in terms of the Cartan integers (cf.~\cite{Fan98} and \cite{Karu13Schubert} and see Proposition~\ref{prop_fan_of_toric_in_full} below). Here, the \emph{Cartan integers} $c_{i,j}$ are entries of the Cartan matrix, which are given by 
\[
c_{i,j} = \begin{cases}
	2 & \text{ if  } i = j, \\
	-1 & \text{ if }|i-j| = 1, \\ 
	0 & \text{ otherwise}
\end{cases}
\]
because we are considering the case when $G$ is of Lie type $A$. 
Considering the proof of Theorem~4.23 in~\cite{LMP_Handbook}, we obtain the following statement:
\begin{proposition}[{see \cite[\S3.7]{GK94Bott}, \cite[Theorem~4.23]{LMP_Handbook}}]\label{prop_fan_of_toric_in_full}
	Let $w = s_{i_1} \cdots s_{i_m}$ be a reduced decomposition of $w \in \mathfrak{S}_n$. Suppose that $i_1,\dots,i_m$ are distinct. Then the fan of the toric Schubert variety $X_{w}^B$ in $G/B$ is isomorphic to the fan in $\R^m$ such that primitive ray vectors are the $2m$ column vectors of the following matrix
\begin{equation}\label{equation_ray_vectors}
\left[
\begin{array}{cccc|cccc}
	1 & 0 & \cdots & 0 & -1 & 0 & \cdots & 0 \\
	0 & 1 &  \cdots & 0 & & -1 & \cdots & 0 \\
	\vdots & & \ddots & & & -c_{i_j,i_k} & \ddots & \\
	0 & 0 & \cdots & 1 & & & & -1
\end{array}
\right] =: [\mathbf{v}_1 \ \dots \ \mathbf{v}_m \ \mathbf{w}_1 \ \dots \ \mathbf{w}_m],
\end{equation}
where the $(j,k)$ entry for $m \geq j >k \geq 1$ in the right submatrix above is $-c_{i_j,i_k}$ and $c_{i,j}$ are Cartan integers. Here, we denote by $\mathbf{v}_i, \mathbf{w}_i$ the column vectors. For each $\mathscr{J} = \{1 \leq j_1 < \cdots < j_k \leq m\} \subseteq [m]$, we have a maximal cone $C_v^B$ corresponding to the fixed point $v \colonequals s_{i_{j_1}} \cdots s_{i_{j_k}}$ as follows:
\[
C_v^B = \Cone( \{ \mathbf{v}_{i} \mid i \notin \mathscr{J} \} \cup \{\mathbf{w}_i \mid i \in \mathscr{J} \}).
\]
Here, we set $[m] \colonequals \{1,\dots,m\}$ for $m > 0$.
\end{proposition}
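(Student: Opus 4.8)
The plan is to realize the toric Schubert variety $X_w^B$ explicitly as a Bott--Samelson variety and then to read off its fan by the standard recipe for iterated $\mathbb{P}^1$-bundles. Since $s_{i_1}\cdots s_{i_m}$ is a reduced word, the Bott--Samelson variety $Z\colonequals P_{i_1}\times_B P_{i_2}\times_B\cdots\times_B P_{i_m}/B$ (with $P_{i_j}$ the minimal parabolic attached to $s_{i_j}$) carries a proper birational $T$-equivariant morphism $\mu\colon Z\to X_w^B$, and when the indices $i_1,\dots,i_m$ are pairwise distinct this morphism is an isomorphism; this is part of the circle of ideas around~\cite{Karu13Schubert}, and it can also be checked directly by induction on $m$, peeling off the outer factor $P_{i_m}\times_B(-)$ and using that Schubert varieties are normal. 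Hence it suffices to compute the fan of $Z$.

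Next I would write $Z$ as a tower of $\mathbb{P}^1$-bundles $Z=Z_m\to Z_{m-1}\to\cdots\to Z_1\to Z_0=\mathrm{pt}$, each map being a projectivization $Z_k=\mathbb{P}(\mathcal{O}\oplus\mathcal{L}_k)\to Z_{k-1}$ of a rank-two bundle equipped with a $T$-linearization, so that $Z_k$ inherits the residual $T$-action and is a smooth projective toric variety. The only nontrivial input is the identification of the class of $\mathcal{L}_k$, equivalently of the $T$-weight of the relative tangent line at each $T$-fixed point of $Z_{k-1}$; this is where the Cartan integers enter, through the reflection formula $s_i(\alpha_j)=\alpha_j-c_{j,i}\,\alpha_i$. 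Concretely, at the fixed point of $Z_{k-1}$ labelled by a subset $\mathscr{J}$ of the already-used indices the relevant weight is $\bigl(\prod_{\ell\in\mathscr{J}}s_{i_\ell}\bigr)(\alpha_{i_k})$, whose expansion over the simple roots $\alpha_{i_\ell}$ has coefficients assembled from the integers $-c_{i_k,i_\ell}$, producing precisely the off-diagonal entries $-c_{i_j,i_k}$ of the matrix in~\eqref{equation_ray_vectors}.

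I would then run the usual inductive description of the fan of a Bott tower: if $N_{k-1}\cong\mathbb{Z}^{k-1}$ carries the fan of $Z_{k-1}$ together with a distinguished ordered lattice basis, then $Z_k$ has cocharacter lattice $N_{k-1}\oplus\mathbb{Z}$, two new rays obtained from $\pm(\text{the new basis vector})$ corrected by the twist recorded in $\mathcal{L}_k$, and maximal cones obtained from those of $Z_{k-1}$ by adjoining one of the two new rays; smoothness of every cone is automatic because the resulting twist matrix is triangular with $\pm 1$ on the diagonal. Unwinding this recursion and bookkeeping the twists against the Cartan integers yields the $2m$ ray generators $\mathbf{v}_1,\dots,\mathbf{v}_m,\mathbf{w}_1,\dots,\mathbf{w}_m$ of~\eqref{equation_ray_vectors}. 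Finally, because the $i_\ell$ are distinct, the $T$-fixed points of $Z$ (hence of $X_w^B$) are indexed by subsets $\mathscr{J}=\{j_1<\cdots<j_k\}\subseteq[m]$ via $v=s_{i_{j_1}}\cdots s_{i_{j_k}}$, and the recursive construction shows that the cone at $v$ is obtained by choosing at each stage $\ell$ the ray $\mathbf{w}_\ell$ when $\ell\in\mathscr{J}$ and the ray $\mathbf{v}_\ell$ otherwise, i.e.\ $C_v^B=\Cone(\{\mathbf{v}_i:i\notin\mathscr{J}\}\cup\{\mathbf{w}_i:i\in\mathscr{J}\})$.

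The step I expect to be the main obstacle is the bookkeeping in the middle two paragraphs: pinning down the $T$-linearization on each $\mathcal{L}_k$ so that the twist picked up at stage $k$ is recorded with exactly the right sign and in exactly the right coordinates (the claim being that the $(j,k)$ entry is $-c_{i_j,i_k}$ for $j>k$, not its transpose or a shifted variant), and matching the ordering conventions of the tower against the left-to-right reading of the reduced word. Fixing $T$-linearizations consistently along the whole tower and comparing with the explicit local trivializations of $Z_k\to Z_{k-1}$ coming from the two Bruhat cells of $\mathrm{SL}_2/B\cong\mathbb{P}^1$ in the $i_k$-direction settles this. Alternatively one may bypass the tower entirely: since a fan is determined by its maximal cones, it is enough to verify that the data in~\eqref{equation_ray_vectors} defines a complete fan all of whose cones $C_v^B$ are smooth, and that the weights of each $C_v^B$ reproduce the $T$-weight decomposition of the tangent space $T_{vB}X_w^B$, computed once more from $s_i(\alpha_j)=\alpha_j-c_{j,i}\alpha_i$ applied to the roots occurring along the Bruhat cell through $vB$.
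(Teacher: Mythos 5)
Your proposal is correct and follows essentially the same route as the sources the paper defers to for this statement (the paper itself gives no independent proof, citing \cite{GK94Bott} and \cite[Theorem~4.23]{LMP_Handbook}): identify $X_w^B$ with the Bott--Samelson variety via \cite{Karu13Schubert} when the letters of the reduced word are distinct, realize it as a Bott tower of $\mathbb{P}^1$-bundles, and read off the rays and maximal cones from the twists, which are the Cartan integers appearing through $s_i(\alpha_j)=\alpha_j-c_{j,i}\alpha_i$. The only care needed is the ordering/sign bookkeeping you already flag, which is harmless in type $A$ since the Cartan matrix is symmetric.
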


\begin{example}\label{example_2314_in_full_flag}
	Let $w  = 2314 = s_1s_2$. 
	Following Proposition~\ref{prop_fan_of_toric_in_full}, the fan of the Schubert variety~$X_{w}^B$ in the full flag variety has four rays whose ray generators are given as the column vectors of the following matrix:
	\[
	\begin{bmatrix}
		1 & 0 & -1 & 0 \\
		0 & 1 & 1 & -1
	\end{bmatrix}
	=: [\mathbf{v}_1 \ \mathbf{v}_2 \ \mathbf{w}_1 \ \mathbf{w}_2].
	\]
	Here, we denote by $\mathbf{v}_i, \mathbf{w}_i$ the ray generators. 
	There are four maximal cones corresponding to fixed points $eB, s_1B, s_2B, s_1s_2B$. In the following table, we display fixed points, the corresponding subsets $\mathscr{J}$ and the maximal cones. Also, see Figure~\ref{fig_fan_2314}.
	\begin{center}
	\begin{tabular}{c|cccc}
		\toprule 
		$v$ & $e$ & $s_1$ & $s_2$ & $s_1s_2$ \\
		\midrule 
		$\mathscr{J}$ & $\emptyset$ & $\{1\}$ & $\{2\}$ & $\{1,2\}$ \\
		$C_v^B$ & $\Cone(\mathbf{v}_1,\mathbf{v}_2)$
		&  $\Cone(\mathbf{w}_1,\mathbf{v}_2)$ 
		&  $\Cone(\mathbf{v}_1,\mathbf{w}_2)$
		& $\Cone(\mathbf{w}_1,\mathbf{w}_2)$\\
		\bottomrule 
	\end{tabular}
	\end{center}
\end{example}
\begin{figure}
	\begin{subfigure}[t]{0.4\textwidth}
		\centering
	\begin{tikzpicture}
		\fill[yellow!20] (0,0)--(2,0)--(2,2)--(0,2)--cycle;
		\fill[orange!20] (0,0)--(0,2)--(-2,2)--cycle;
		\fill[blue!15] (0,0)--(-2,2)--(-2,-2)--(0,-2)--cycle;
		\fill[green!15] (0,0)--(0,-2)--(2,-2)--(2,0)--cycle;

		\draw[gray,->] (-2,0)--(2,0);
		\draw[gray, ->] (0,-2)--(0,2);
		\draw[very thick,->] (0,0)--(1,0) node[at end, above] {$\mathbf{v}_1$};
		\draw[very thick, ->] (0,0)--(0,1) node[at end, right] {$\mathbf{v}_2$};
		\draw[very thick, ->] (0,0)--(-1,1) node[at end, above] {$\mathbf{w}_1$};
		\draw[very thick, ->] (0,0)--(0,-1) node[at end, right] {$\mathbf{w}_2$};
		
		\node[blue] at (1.2,1.2) {$eB$};
		\node[blue] at (-0.6, 1.7) {$s_1B$};
		\node[blue] at (1.2, -1.2) {$s_2 B$};
		\node[blue] at (-1.2, -1.2) {$s_1s_2B$};
	\end{tikzpicture}
	\caption{The fan of $X_{2314,B}$}\label{fig_fan_2314}
\end{subfigure}
\begin{subfigure}[t]{0.45\textwidth}
			\centering
	\begin{tikzpicture}
		\fill[yellow!20] (0,0)--(2,0)--(2,2)--(-2,2)--cycle;
		\fill[blue!15] (0,0)--(-2,2)--(-2,-2)--(0,-2)--cycle;
		\fill[green!15] (0,0)--(0,-2)--(2,-2)--(2,0)--cycle;
		
		\draw[gray,->] (-2,0)--(2,0);
		\draw[gray, ->] (0,-2)--(0,2);
		\draw[very thick,->] (0,0)--(1,0) node[at end, above] {$\mathbf{v}_1$};
		\draw[very thick, ->] (0,0)--(-1,1) node[at end, above] {$\mathbf{w}_1$};
		\draw[very thick, ->] (0,0)--(0,-1) node[at end, right] {$\mathbf{w}_2$};
		
		\node[blue] at (1.2,1.2) {$eB$};
		\node[blue] at (1.2, -1.2) {$s_2 B$};
		\node[blue] at (-1.2, -1.2) {$s_1s_2B$};
	\end{tikzpicture}
	\caption{The fan of $X_{2314}$}\label{fig_fan_2314_Gr}
\end{subfigure}
	\caption{The fan of $X_{2314}$ in $G/B$ or $G/P$}
\end{figure}
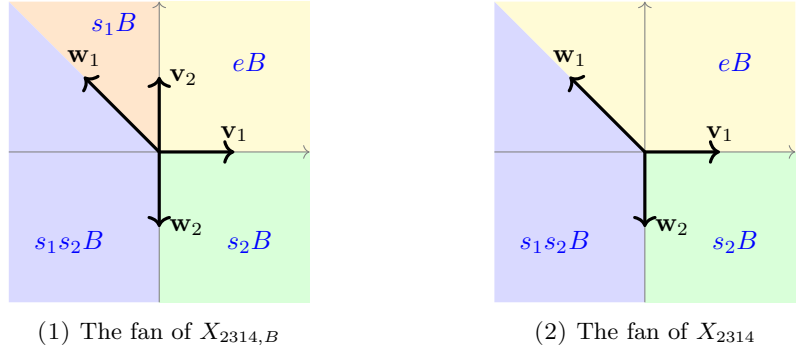

\begin{example}\label{example_2413_in_full_flag}
Let $w = 2413 = s_1s_3s_2$. 
Following Proposition~\ref{prop_fan_of_toric_in_full}, the fan of the Schubert variety~$X_{w}^B$ in the full flag variety has six rays whose ray generators are given as the column vectors of the following matrix:
\[
\begin{bmatrix}
	1 & 0 & 0 & -1 & 0 & 0 \\
	0 & 1 & 0 & 0 & -1 & 0 \\
	0 & 0 & 1 & 1 &  1 & -1 
\end{bmatrix} =: [\mathbf{v}_1 \ \mathbf{v}_2 \ \mathbf{v}_3 \ \mathbf{w}_1 \ \mathbf{w}_2 \ \mathbf{w}_3 ].
\]
Here, we denote by $\mathbf{v}_i, \mathbf{w}_i$ the column vectors. Then there are eight maximal cones as in Table~\ref{table_maximal_cones_2413}. 
\begin{table}
	\begin{tabular}{c|cccc}
		\toprule 
		$v$ & $e$ & $s_1$ & $s_3$ & $s_2$\\
		\midrule 
		$\mathscr{J}$ & $\emptyset$ & $\{1\}$ & $\{2\}$ & $\{3\}$\\
		$C_v^B$ & $\Cone(\mathbf{v}_1,\mathbf{v}_2,\mathbf{v}_3)$ & $\Cone(\mathbf{w}_1,\mathbf{v}_2,\mathbf{v}_3)$ &
		$\Cone(\mathbf{v}_1,\mathbf{w}_2,\mathbf{v}_3)$ & $\Cone(\mathbf{v}_1,\mathbf{v}_2,\mathbf{w}_3)$\\
		\hline 
		\hline 
		$v$ & $s_1s_3$ & $s_1s_2$ & $s_3s_2$ & $s_1s_3s_2$ \\
		\midrule 
		$\mathscr{J}$ & $\{1,2\}$ & $\{1,3\}$ & $\{2,3\}$ & $\{1,2,3\}$ \\
		$C_v^B$
		&	$\Cone(\mathbf{w}_1,\mathbf{w}_2,\mathbf{v}_3)$ 
		& 	$\Cone(\mathbf{w}_1,\mathbf{v}_2,\mathbf{w}_3)$
		& $\Cone(\mathbf{v}_1,\mathbf{w}_2,\mathbf{w}_3)$ 
		&	$\Cone(\mathbf{w}_1,\mathbf{w}_2,\mathbf{w}_3)$ 		\\
		\bottomrule 
\end{tabular}
\caption{Maximal cones in the fan of $X_{2413} \subset G/B$.}\label{table_maximal_cones_2413}
\end{table}
\end{example}

Since the fibration $\pi \colon G/B \to G/P$ is $T$-equivariant sending $wB \mapsto wP$ for any $T$-fixed point $wB \in G/B$, we obtain the following. 
\begin{proposition}\label{prop_fan_in_Gr}
	Let $w \in W^P$ defining a toric Schubert variety in $G/P$. For $v \in W^P$ such that $v \leq w$, the corresponding maximal cone $C_v$ in the fan of the toric Schubert variety $X_w$ in $G/P$ is given as follows:
	\[
	C_v = \bigcup_{u \in [v]} C_u^B,
	\]
	where $[v] := \{ u \in \mathfrak{S}_n \mid u \leq w,~~ u W_P = v W_P \}$. 
\end{proposition}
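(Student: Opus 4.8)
The plan is to realize $X_w$ as the image of the toric Schubert variety $X_w^B\subset G/B$ --- whose fan is described in Proposition~\ref{prop_fan_of_toric_in_full} --- under the $T$-equivariant fibration $\pi$, and then to recover the fan of $X_w$ via the standard correspondence between proper birational toric morphisms and refinements of fans. First I would verify that $\pi$ restricts to a $T$-equivariant morphism $f\colon X_w^B\to X_w$ that is surjective, proper, and birational: surjectivity is clear from $X_w^B=\overline{BwB/B}$, $X_w=\overline{BwP/P}$, and $\pi(BwB/B)=BwP/P$; properness holds because $X_w^B$ is projective; and birationality follows because $w$ is the minimal-length representative of $wW_P$, so $\pi$ restricts to an isomorphism $BwB/B\cong BwP/P$ of the open cells. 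By Proposition~\ref{prop_classify_toric_Schubert}(2) a reduced word of $w$ has distinct simple reflections, so $X_w^B$ is a smooth toric variety by~\cite{Karu13Schubert}, while $X_w$ is toric by hypothesis; since $f$ is an isomorphism on the open cells, $X_w^B$ and $X_w$ have the same dense torus, and so both fans --- write $\Sigma^B$ for $X_w^B$ and $\Sigma$ for $X_w$ --- may be taken in a single lattice $N\cong\Z^{\ell(w)}$, with $f$ the toric morphism induced by $\mathrm{id}_N$.

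Because $f$ is proper and induced by $\mathrm{id}_N$, the fan $\Sigma^B$ refines $\Sigma$. To see which cones get glued, fix $v\in W^P$ with $v\le w$ and let $U_{C_v}\subset X_w$ be the affine chart of the maximal cone $C_v$, whose only $T$-fixed point is $vP$. Its preimage $f^{-1}(U_{C_v})$ is a $T$-invariant open subset of $X_w^B$, hence the toric variety of the subfan $\{\sigma\in\Sigma^B\mid \sigma\subseteq C_v\}$, and properness of $f^{-1}(U_{C_v})\to U_{C_v}$ forces the support of this subfan to be $C_v$; thus $C_v$ is the union of the maximal cones of $\Sigma^B$ contained in it. By Proposition~\ref{prop_fan_of_toric_in_full} the maximal cones $C_u^B$ of $\Sigma^B$ are indexed by the $T$-fixed points $uB$ of $X_w^B$, with $u$ ranging over $\{u\in\mathfrak{S}_n\mid u\le w\}$, and $C_u^B\subseteq C_v$ if and only if $uB\in f^{-1}(U_{C_v})$, i.e.\ $f(uB)=uP\in U_{C_v}$; since $uP$ is a $T$-fixed point and $vP$ is the unique $T$-fixed point of $U_{C_v}$, this holds precisely when $uP=vP$, equivalently $uW_P=vW_P$. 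Hence $C_v=\bigcup\{C_u^B\mid u\le w,\ uW_P=vW_P\}=\bigcup_{u\in[v]}C_u^B$, which is the assertion.

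I expect the main obstacle to be keeping the dictionary straight rather than any deep point: one must confirm that $f$ is a toric morphism in the strict sense (using that $X_w$ is normal and $X_w^B$ smooth), that $f^{-1}(U_{C_v})$ is exactly the open toric subvariety attached to $\{\sigma\in\Sigma^B\mid\sigma\subseteq C_v\}$ with properness pinning down its support, and --- the most error-prone point --- that the orbit--cone correspondences on the two sides are matched so that the maximal cone of $\Sigma^B$ labelled by $u$ really lands in the maximal cone of $\Sigma$ labelled by $uW_P$. Once these are nailed down, the conclusion follows from a short chase on $T$-fixed points.
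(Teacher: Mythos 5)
Your proposal is correct and follows essentially the same route as the paper: both realize $X_w$ as the image of the smooth toric Schubert variety $X_w^B$ under the $T$-equivariant projection $\pi$, observe that the induced lattice map is the identity so that the fan of $X_w^B$ refines that of $X_w$, and then determine which maximal cones $C_u^B$ lie in $C_v$ by tracking the $T$-fixed points, i.e.\ $C_u^B\subseteq C_v$ iff $\pi(uB)=vP$ iff $uW_P=vW_P$. The only cosmetic difference is that you justify the refinement and the cone-containment via properness and preimages of affine charts, whereas the paper cites the toric-morphism compatibility and orbit--cone correspondence results of Cox--Little--Schenck directly.
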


\begin{proof}
	Let $\Sigma_w^B$ be the fan of the toric Schubert variety $X_w^B$ in $G/B$ and let $\Sigma_w$ be that of $X_w$ in~$G/P$. Note that two Schubert varieties have the same dimension, say $k$. 
	Moreover, the restriction $\pi|_{X_w^B}$ of the projection $\pi \colon G/B \to G/P$ to $X_w^B$ defines a toric morphism $\phi \colonequals \pi|_{X_w^B} \colon X_w^B \to X_w$ since $\pi$ is a $T$-equivariant map sending $BwB/B$ to $BwP/P$, and we have $X_w^B = \bigcup_{v \leq w} BvB/B$ and $X_w = \bigcup_{v \leq w} BvP/P$.

	Since $\phi$ is a toric morphism, $\phi$ induces a $\Z$-linear map $\overline{\phi}$ that is compatible with the fans $\Sigma_w^B$ and $\Sigma_w$ by Theorem~3.3.4 in~\cite{CLS11Toric}. 
	Moreover, since the map $\phi$ is $T$-equivariant, the map $\overline{\phi} \colon \Z^k \to \Z^k$ is the identity. Accordingly, for each maximal cone $C_u^B \in \Sigma_w^B$, there exists a cone $C_v \in \Sigma_w$, corresponding to the fixed point $vP$, such that $\overline{\phi}_{\R}(C_u^B) = C_u^B \subseteq C_v$. Here, the cone $C_v$ containing $C_u^B$ can be specified by considering the corresponding torus orbits by~\cite[Lemma~3.3.21]{CLS11Toric}. By applying the Orbit-Cone correspondence of toric varieties (see~\cite[Theorem~3.2.6]{CLS11Toric}), it is enough to consider the image of the fixed points under the map $\phi$ to obtain $C_v$, that is, 
	\[
	C_u^B \subseteq C_v \iff \phi(uB) = vP.
	\]
	This implies 
	\[
	C_v = \bigcup_{\substack{u \leq w, \\ \phi(uB) = vP}} C_u^B. 
	\]
	Moreover, since $\phi(uB) = vP$ if and only if $u W_P = v W_P$, the result follows. 
\end{proof}

\begin{example}
	Consider $\Gr(2,4)$ and let $w = 2314$. Then there are three Grassmannian permutations $e, s_2, s_1s_2$ which are less than or equal to $w$. For each permutation, we have 
	\[
	[e] = \{e, s_1\}, \quad  
	[s_2] = \{s_2\}, \quad [s_1s_2] = \{s_1s_2\}.
	\]
	Accordingly, the fan of $X_{2314}$ is given as in Figure~\ref{fig_fan_2314_Gr} which defines the complex projective plane~$\C \P^2$. 
\end{example}

\section{Fano conditions on Gorenstein toric Schubert varieties}\label{section_Gorenstein_toric_Schubert}
In this section, we consider Gorenstein toric Schubert varieties in Grassmannian $\Gr(d,n)$ and prove Theorem~\ref{thm_intro_Fano} by providing descriptions of their fans explicitly. We denote by $W^P$ the set of Grassmannian permutations having one descent at $d$ as in the previous section. 

We first consider the smoothness and Gorenstein conditions on toric Schubert varieties. 
\begin{lemma}\label{prop_toric_smoothness}
	For a toric Schubert variety $X_w$, the following statements are  equivalent. 
	\begin{enumerate}
		\item $X_w$ is smooth. 
		\item A reduced decomposition of $w$ is the identity $e$ or it is given in the one of the following two forms. 
				\begin{enumerate}
			\item $w = s_{d+x-1} s_{d+x-2} \cdots s_d$ \quad for $1 < x \leq n-d$;  
			\item $w = s_{d-y} s_{d-y+1} \cdots s_d$ \quad for $0 \leq y \leq d-1$.
		\end{enumerate}
		\item The partition $\lambda_w$ is a column partition or a row partition. 		
	\end{enumerate}
\end{lemma}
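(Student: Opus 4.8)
The plan is to deduce each equivalence from the two classification results already in hand: Proposition~\ref{prop_classify_toric_Schubert}, which matches the reduced words \eqref{eq_w_reduced} describing toric $X_w$ with the empty or hook-shaped partitions $\lambda_w$, and Proposition~\ref{prop_smoothness}(1), which detects smoothness of a Grassmannian Schubert variety via $\lambda_w$. Since $X_w$ is assumed toric, Proposition~\ref{prop_classify_toric_Schubert} applies, so $w=e$ or $w$ has the reduced decomposition \eqref{eq_w_reduced} for some $1 \le x \le n-d$ and $0 \le y \le d-1$, and accordingly $\lambda_w = \emptyset$ or $\lambda_w = (x,1^y)$. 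I would prove $(2)\Leftrightarrow(3)$ and $(1)\Leftrightarrow(3)$ separately, treating $w=e$ (equivalently $\lambda_w=\emptyset$, equivalently $X_w$ a point) on its own, consistently with the way the identity is singled out in statement~(2).

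For $(2)\Leftrightarrow(3)$, I would match the two parametrizations using the dictionary worked out in the proof of Proposition~\ref{prop_classify_toric_Schubert}. Form~(b), $w = s_{d-y}s_{d-y+1}\cdots s_d$, is exactly the subcase $x=1$ of \eqref{eq_w_reduced} (the prefix $s_{d+x-1}\cdots s_{d+1}$ collapsing to $e$), for which $\lambda_w = (1^{1+y})$ runs over all column partitions inside the $d\times(n-d)$ rectangle, the value $y=0$ giving the single box. Form~(a), $w = s_{d+x-1}\cdots s_d$ with $x>1$, is the subcase $y=0$, $x>1$ of \eqref{eq_w_reduced}, for which $\lambda_w = (x)$ runs over all row partitions of length at least $2$. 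Conversely, every row $(x)$ with $x\ge 2$ comes from form~(a), every column $(1^k)$ with $k\ge 1$ from form~(b) (take $y=k-1$), the single box from form~(b) with $y=0$, and $\emptyset$ from $w=e$; these are exactly the partitions allowed in~(3). Hence (2) and (3) describe the same set of permutations.

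For $(1)\Leftrightarrow(3)$, I would invoke Proposition~\ref{prop_smoothness}(1): $X_w$ is smooth if and only if $\lambda_w$ is empty or consists of a single rectangle $(p^q)$. Because $\lambda_w$ is empty or a hook $(x,1^y)$, it only remains to observe that a hook $(x,1^y)$ has all its parts equal precisely when $y=0$, in which case it is the row $(x)=(x^1)$, or $x=1$, in which case it is the column $(1^{1+y})$. Thus, under the standing toric hypothesis, $\lambda_w$ being empty or a single rectangle is equivalent to $\lambda_w$ being empty, a row, or a column, i.e.\ condition~(3); feeding this into Proposition~\ref{prop_smoothness}(1) yields $(1)\Leftrightarrow(3)$ and closes the triangle.

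I do not expect a genuine obstacle: the argument is purely a matter of aligning three descriptions of the same family, so the only real work is the bookkeeping of the degenerate members — the empty partition, which we identify with $w=e$, and the single box $(1)$, which is simultaneously a row and a column and which statement~(2) assigns to form~(b) with $y=0$. I would fix these conventions at the start of the proof so that the three equivalences read off cleanly from Propositions~\ref{prop_classify_toric_Schubert} and~\ref{prop_smoothness}.
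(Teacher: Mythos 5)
Your proposal is correct and follows essentially the same route as the paper: apply Proposition~\ref{prop_classify_toric_Schubert} to reduce to hook-shaped $\lambda_w$, use Proposition~\ref{prop_smoothness}(1) for $(1)\Leftrightarrow(3)$, and read off $(2)\Leftrightarrow(3)$ from the same dictionary. You merely spell out the bookkeeping for the degenerate cases ($\emptyset$ and the single box) more explicitly than the paper does.
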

\begin{proof}
	Suppose that $X_w$ is a toric Schubert variety. 
	By Proposition~\ref{prop_classify_toric_Schubert}, the partition $\lambda_w$ is a hook-shaped partition of the form $(x, 1^y)$ for $1 \leq x \leq n-d$ and $0\leq y \leq d-1$. By Proposition~\ref{prop_smoothness}(1), we obtain that $X_w$ is smooth if and only if $\lambda_w = (x)$ or $(1^y)$ which proves the equivalence $(1) \iff (3)$. Moreover, again by Proposition~\ref{prop_classify_toric_Schubert}, we obtain $(2) \iff (3)$, which completes the proof. 
\end{proof}	
	
\begin{lemma}\label{prop_toric_Gorenstein}
	For a toric Schubert variety $X_w$, the following statements are equivalent.
	\begin{enumerate}
	\item $X_w$ is Gorenstein.
	\item $X_w$ is smooth or 
	\[
	w = s_{d+k-1} s_{d+k-2} \cdots s_{d+1} s_{d-k+1} s_{d-k+2} \cdots s_d \quad \text{ for }0 < k \leq d.
	\]
	\item The partition $\lambda_w$ is a row partition, a column partition, or a hook-shaped partition having the same arm-length and leg-length. 
	\end{enumerate}
\end{lemma}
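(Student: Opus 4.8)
The plan is to reduce the entire statement to the combinatorics of the partition $\lambda_w$. By Proposition~\ref{prop_classify_toric_Schubert}, for a toric Schubert variety $X_w$ the partition $\lambda_w$ is either empty or a hook $(x,1^y)$ with $1\le x\le n-d$ and $0\le y\le d-1$; and by Proposition~\ref{prop_smoothness}(2), $X_w$ is Gorenstein exactly when all corners of $\lambda_w$ lie on one common antidiagonal. So the whole question becomes: where are the corners of a hook sitting inside the $d\times(n-d)$ rectangle, and when are they all on a single antidiagonal?

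To prove $(1)\Leftrightarrow(3)$, I would first locate the corners of $\lambda_w=(x,1^y)$. The corners of the bounding lattice path correspond bijectively to the removable boxes of $\lambda_w$, i.e. the boxes $(i,\lambda_i)$ with $\lambda_i>\lambda_{i+1}$, and two corners lie on the same antidiagonal if and only if the corresponding removable boxes have the same coordinate sum $i+\lambda_i$. If $\lambda_w$ is a single row ($y=0$) or a single column ($x=1$) — including the empty partition — there is at most one corner, so the antidiagonal condition is automatic and $X_w$ is Gorenstein; by Lemma~\ref{prop_toric_smoothness} these are precisely the smooth toric Schubert varieties, which is consistent. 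If instead $x\ge 2$ and $y\ge 1$, the only removable boxes of $(x,1^y)$ are $(1,x)$ and $(y+1,1)$, so there are exactly two corners, and they share an antidiagonal if and only if $1+x=(y+1)+1$, that is $x=y+1$, that is the arm-length $x-1$ equals the leg-length $y$. Collecting the cases yields exactly statement $(3)$.

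For $(2)\Leftrightarrow(3)$, I would translate $(3)$ back into reduced words through Proposition~\ref{prop_classify_toric_Schubert}. Its smooth part (the rows and columns) is precisely the smooth case of $(2)$ by Lemma~\ref{prop_toric_smoothness}. For the remaining, non-smooth part, substituting $x=k$ and $y=k-1$ into the reduced decomposition~\eqref{eq_w_reduced} gives
\[
w=s_{d+k-1}s_{d+k-2}\cdots s_{d+1}\,s_{d-k+1}s_{d-k+2}\cdots s_d,
\]
whose partition is $\lambda_w=(k,1^{k-1})$ — the hook with arm-length and leg-length both equal to $k-1$ — and conversely any hook with common arm-length and leg-length $m$ equals $(m+1,1^m)$ and is of this form with $k=m+1$ (for $k$ in the appropriate range so that the hook fits inside the rectangle). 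Combining the two parts gives $(2)\Leftrightarrow(3)$, and together with the previous equivalence this proves the lemma.

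The step I expect to be the main obstacle is the corner bookkeeping in the second paragraph: one must fix the convention for drawing $\lambda_w$ inside the rectangle and be careful about which lattice points count as corners and which lines serve as the relevant antidiagonals, so that the criterion of Proposition~\ref{prop_smoothness}(2) genuinely collapses to the single relation $x=y+1$. Once the corners of a hook are pinned down, the rest is a short case check.
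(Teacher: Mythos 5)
Your proposal is correct and follows essentially the same route as the paper: both reduce to Proposition~\ref{prop_smoothness}(2) via Proposition~\ref{prop_classify_toric_Schubert}, observe that a hook has one corner (row/column case, automatically Gorenstein and exactly the smooth case by Lemma~\ref{prop_toric_smoothness}) or two corners lying on a common antidiagonal precisely when the arm-length equals the leg-length, and then translate back into the reduced word of~\eqref{eq_w_reduced}. Your explicit bookkeeping of corners as removable boxes with equal coordinate sums is a slightly more careful version of the paper's pictorial argument, and it correctly yields the relation $x=y+1$.
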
	
\begin{proof}
	Suppose that $X_w$ is a toric Schubert variety. 
	By Proposition~\ref{prop_classify_toric_Schubert}, the partition $\lambda_w$ is a hook-shaped partition. 
	There are at most two corners of $\lambda_w$. Indeed, there is one corner if $\lambda_w$ is a row partition or a column partition; two corner if $\lambda_w$ has at least one leg-length and one arm-length. 
	We depict a diagram for a row partition with one corner colored in blue below. 
	\begin{center}
	\begin{tikzpicture}[inner sep=0in,outer sep=0in]
		\node (n) {
			\begin{varwidth}{5cm}{
					\begin{ytableau}
						*(gray!30)~ & *(gray!30)~&*(gray!30)~ &*(gray!30)~ & *(gray!30)~ & ~ & ~\\
						~ & ~& ~ & ~ & ~ & ~ & ~ \\
						~ & ~& ~ & ~ & ~ & ~ & ~ \\
		\end{ytableau}}\end{varwidth}};
		\draw[thick,red] (n.south west)--
		([yshift=-0.545cm]n.north west) --
		([xshift=0.545 cm, yshift = -0.545cm]n.north west) --
		([xshift=-1.09cm, yshift=-0.545cm]n.north east) 	node[draw=none, shape = circle,fill = blue, scale= 3] {}
		--
		([xshift=-1.09cm]n.north east) --
		(n.north east);
	\end{tikzpicture}
\end{center} 
If there is one corner, that is, $\lambda_w$ is a row partition or a column partition, then it corresponds to a Gorenstein Schubert variety by Proposition~\ref{prop_smoothness}(2).

On the other hand, consider a hook-shaped diagram as depicted below. 
Moreover, we draw two antidiagonals passing through two corners in purple. 
\begin{center}
	\begin{tikzpicture}[inner sep=0in,outer sep=0in]
		\node (n) {
			\begin{varwidth}{5cm}{
					\begin{ytableau}
						*(gray!30)~ & *(gray!30)~&*(gray!30)~ &*(gray!30)~ & *(gray!30)~ & ~ & ~\\
						*(gray!30)~ & ~& ~ & ~ & ~ & ~ & ~ \\
						*(gray!30)~ & ~& ~ & ~ & ~ & ~ & ~ \\
						*(gray!30)~ & ~& ~ & ~ & ~ & ~ & ~ \\
						~ & ~& ~ & ~ & ~ & ~ & ~ \\
		\end{ytableau}}\end{varwidth}};
		\draw[thick,red] (n.south west)--
		([yshift=0.545cm]n.south west) --
		([xshift=0.545cm, yshift=0.545cm]n.south west)	node[draw=none, shape = circle,fill = blue, scale= 3] {} 
		--
		([xshift=0.545 cm, yshift = -0.545cm]n.north west) --
		([xshift=-1.09cm, yshift=-0.545cm]n.north east) 	node[draw=none, shape = circle,fill = blue, scale= 3] {}
		--
		([xshift=-1.09cm]n.north east) --
		(n.north east);
		
		\draw[thick, dashed, purple] (n.south west)--([xshift=-1.09cm] n.north east);
		\draw[thick, dashed, purple] ([xshift=0.545cm] n.south west)--([xshift=-0.545cm] n.north east);
	\end{tikzpicture}
\end{center} 
In this case, we have two corners and they lie on the same antidiagonal if and only if the arm-length and the leg-length are the same. By Proposition~\ref{prop_smoothness}(2) again, this proves $(1) \iff (3)$.  

We proved in Lemma~\ref{prop_toric_smoothness} that the partition $\lambda_w$ is a row partition or a column partition if and only if $X_w$ is smooth. Moreover, when $\lambda_w$ is a hook-shaped partition having the arm-length~$k$ and the leg-length $k$, then the corresponding word $w$ is of the form 
\[
w = s_{d+k-1} s_{d+k-2} \cdots s_{d+1} s_{d-k+1} s_{d-k+2} \cdots s_d
\]
by Proposition~\ref{prop_classify_toric_Schubert}. This proves $(2) \iff (3)$, which completes the proof. 
\end{proof}

Analyzing the set of torus fixed points in a smooth toric Schubert variety, we obtain the following corollary. 
\begin{corollary}\label{cor_smooth_toric_Schubert}
A smooth toric Schubert variety $X_w$ in $\Gr(d,n)$ is isomorphic to the complex projective space $\C \P^{\ell(w)}$. 
\end{corollary}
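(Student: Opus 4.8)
The plan is to show that a smooth toric Schubert variety $X_w$ in $\Gr(d,n)$ has exactly $\ell(w)+1$ torus-fixed points, and then to invoke the fact that a smooth complete toric variety attaining this minimal number of fixed points is a projective space. First I would record the possible shapes of $w$: if $w=e$ then $X_w$ is a point, which is $\C\P^0$, so assume $w\neq e$. By Lemma~\ref{prop_toric_smoothness}, $\lambda_w$ is then either a single row $(x)$ with $1\le x\le n-d$ or a single column $(1^h)$ with $1\le h\le d$; in either case $X_w$ is a smooth projective, hence complete, toric variety of dimension $\ell(w)$, and $\ell(w)$ equals the number of boxes of $\lambda_w$ (namely $x$, resp.\ $h$).

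Next I would count the $T$-fixed points. As recalled in Section~\ref{section_preliminaries}, the $T$-fixed points of $X_w$ are in bijection with $\{v\in W^P : v\le w\}$, and $v\le w$ if and only if $\lambda_v\le\lambda_w$ entrywise. When $\lambda_w=(x)$, the partitions $\mu$ with $\mu\le\lambda_w$ are exactly $\emptyset,(1),(2),\dots,(x)$, and when $\lambda_w=(1^h)$ they are exactly $\emptyset,(1^1),(1^2),\dots,(1^h)$. In both cases $X_w$ has precisely $\ell(w)+1$ fixed points, so by the Orbit--Cone correspondence its fan has precisely $\ell(w)+1$ maximal cones.

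Finally I would conclude. Put $m:=\ell(w)=\dim X_w$. Because $X_w$ is smooth and complete, every maximal cone of its fan is generated by $m$ rays forming a $\Z$-basis of the lattice; and a complete simplicial fan in $\R^m$ with only $m+1$ maximal cones is forced to have exactly $m+1$ rays, any $m$ of which are a lattice basis. Indeed, two adjacent maximal cones differ in a single ray, each maximal cone has exactly $m$ facets hence $m$ neighbours, so with $m+1$ cones every maximal cone $\langle v_1,\dots,v_m\rangle$ is adjacent to all the others; comparing the cone across the facet opposite $v_i$ with the one across the facet opposite $v_j$ forces the two "new" rays to coincide, so there is a single new ray $v_0$. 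Such a fan is the fan of $\C\P^m$; equivalently, $X_w$ is a smooth projective toric variety of Picard rank one and is therefore a projective space (see~\cite{CLS11Toric}). Hence $X_w\cong\C\P^{\ell(w)}$.

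The step I expect to be the crux is the last one, passing from "smooth, complete, $m+1$ maximal cones" to "$\cong\C\P^m$." If one prefers to avoid the combinatorial ray count, there are two alternatives. One can compute the fan of $X_w$ outright from Proposition~\ref{prop_fan_in_Gr} and Proposition~\ref{prop_fan_of_toric_in_full}: a reduced word of $w$ is an interval of consecutive simple reflections, so the Cartan entries appearing in~\eqref{equation_ray_vectors} are completely explicit, and after carrying out the union of cones in Proposition~\ref{prop_fan_in_Gr} one recognizes the fan of $\C\P^{\ell(w)}$ directly. Alternatively, one can bypass fans altogether: by Proposition~\ref{prop_smoothness}(3) the row case $\lambda_w=(x)$ is isomorphic to the column case $\lambda_w=(1^x)$ in a suitable Grassmannian, and a single-column Grassmannian Schubert variety consists of the $d$-planes $V$ with $F_{j}\subseteq V\subseteq F_{j+h}$ for a fixed reference flag $F_\bullet$, which is manifestly the projective space $\P(F_{j+h}/F_{j})\cong\C\P^h$.
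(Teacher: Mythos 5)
Your argument follows essentially the same route as the paper's: both count the $T$-fixed points of $X_w$ using Lemma~\ref{prop_toric_smoothness} together with the correspondence between Bruhat order and containment of partitions, find that there are exactly $\ell(w)+1$ of them, and then invoke the fact that the only smooth projective toric variety of dimension $m$ with $m+1$ fixed points is $\C\P^m$. Your write-up simply supplies more justification for that final uniqueness step (which the paper asserts without proof) and sketches two optional alternative routes.
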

\begin{proof}
Let $w$ be a Grassmannian permutation defining smooth toric Schubert variety. By Lemma~\ref{prop_toric_smoothness}, a reduced expression of $w$ is one of the following forms:
\begin{enumerate}
	\item $w = s_{d+x-1} s_{d+x-2} \cdots s_d$ \quad for $1 < x \leq n-d$;  or
	\item $w = s_{d-y} s_{d-y+1} \cdots s_d$ \quad for $0 \leq y \leq d-1$.
\end{enumerate}
In either case, the set of Grassmannian permutations which is less than or equal to $w$ has the cardinality $\ell(w)+1$. Accordingly, the Schubert variety $X_w$ is isomorphic to $\C \P^{\ell(w)}$ since it is the only smooth projective toric variety of dimension $\ell(w)$ having the $\ell(w)+1$ fixed points. 
\end{proof}

\begin{remark}	
	We notice that any toric Schubert variety in a Grassmannian variety can be obtained by taking the closure of a certain $T$-orbit (with respect to the left multiplication). One may consider other torus orbit closures and it is known from~\cite{NO_2019} that any smooth $T$-orbit closure in a Grassmannian variety is a product of complex projective spaces.  
\end{remark}

As is considered in Lemma~\ref{prop_toric_Gorenstein}, any Gorenstein toric Schubert variety $X_w$ is smooth or $\lambda_w = (k, 1^{k-1})$. 
To study \emph{singular} Gorenstein toric Schubert varieties, it is enough to consider the case when $k = d$ and $n=2d$, that is, $\lambda_w = (d, 1^{d-1})$ in $\Gr(d,2d)$ since the isomorphism classes of Schubert varieties can be decided by the partitions $\lambda_w$ by Proposition~\ref{prop_smoothness}(3). 
That is, it is enough to consider 
\begin{equation}\label{eq_w}
		w_d \colonequals s_{2d-1} s_{2d-2} \cdots s_{d+1} s_{1} \cdots s_d 
\end{equation}
for each positive integer $d$. 

To describe the fan of the toric Schubert variety $X_{w_d}$ in $\Gr(d,2d)$, we consider the torus fixed points in $X_{w_d}$ and their lifts in $G/B$. 

\begin{lemma}\label{lemma_Gr_permutation_v_leq_wd}
Let $w_d$ be a permutation given in~\eqref{eq_w}. Then any Grassmannian permutation $v$ satisfies $v \leq w_d$ if and only if $v=e$ or 
\[
v = s_{2d-a-1} s_{2d-a-2} \cdots s_{d+1} s_{b+1} s_{b+2} \cdots s_d
\]
for $0 \leq a \leq d-1$ and $ 0 \leq b \leq d-1$. Here, $a=d-1$ means $s_{2d-a-1} s_{2d-a-2} \cdots s_{d+1}=e$.
\end{lemma}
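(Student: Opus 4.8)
The plan is to translate the statement into the combinatorics of partitions contained in the hook $(d,1^{d-1})$, using the dictionary between Bruhat order and containment of Young diagrams recalled in Section~\ref{section_preliminaries} together with Proposition~\ref{prop_classify_toric_Schubert}.

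First I would identify $\lambda_{w_d}$. Comparing the reduced word~\eqref{eq_w} for $w_d$ with the reduced word~\eqref{eq_w_reduced} in the case $n=2d$, one reads off $x=d$ and $y=d-1$, so by Proposition~\ref{prop_classify_toric_Schubert} we get $\lambda_{w_d}=(d,1^{d-1})$, the full hook inside the $d\times d$ square. Since both $v$ and $w_d$ lie in $W^P$, the partition description of Bruhat order gives $v\le w_d$ if and only if $\lambda_v\le\lambda_{w_d}$, i.e. $(\lambda_v)_i\le(\lambda_{w_d})_i$ for every $i$.

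Next I would describe the partitions $\mu$ with $\mu\le(d,1^{d-1})$. Because $(d,1^{d-1})$ has exactly one box in each of its rows $2,\dots,d$, any such $\mu$ satisfies $\mu_1\le d$ and $\mu_i\le 1$ for $i\ge 2$; hence $\mu$ is either the empty partition or a hook $(x,1^y)$ with $1\le x\le d$ and $0\le y\le d-1$, and conversely every such hook is contained in $(d,1^{d-1})$. Thus $v\le w_d$ precisely when $\lambda_v=\emptyset$ (equivalently $v=e$) or $\lambda_v=(x,1^y)$ for some $1\le x\le d$ and $0\le y\le d-1$. For $\lambda_v=(x,1^y)$ in this range, Proposition~\ref{prop_classify_toric_Schubert} yields the reduced word $v=s_{d+x-1}s_{d+x-2}\cdots s_{d+1}\,s_{d-y}s_{d-y+1}\cdots s_d$, and substituting $a=d-x$ and $b=d-1-y$ (so that $0\le a\le d-1$, with $a=d-1$ exactly when the arm part is empty, and $0\le b\le d-1$) rewrites it as $v=s_{2d-a-1}s_{2d-a-2}\cdots s_{d+1}\,s_{b+1}s_{b+2}\cdots s_d$, which is the asserted form; this establishes both implications.

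The substantive input is entirely Proposition~\ref{prop_classify_toric_Schubert} and the partition description of the Bruhat order, so the only place needing genuine care — and the closest thing to an obstacle — is the bookkeeping of the boundary cases of the reparametrization (the empty arm when $x=1$, the row partitions when $y=0$, and the isolated permutation $v=e$) together with the verification that the ranges of the new parameters $(a,b)$ match those of $(x,y)$ exactly.
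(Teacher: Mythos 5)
Your argument is correct and follows essentially the same route as the paper: identify $\lambda_{w_d}=(d,1^{d-1})$, use the equivalence $v\le w_d\iff\lambda_v\le\lambda_{w_d}$ to see that $\lambda_v$ is empty or a hook $(x,1^y)$ with $1\le x\le d$, $0\le y\le d-1$, and then apply Proposition~\ref{prop_classify_toric_Schubert} with the substitution $a=d-x$, $b=d-y-1$. The paper's proof is just a terser version of the same reasoning.
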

\begin{proof}
Since $v=e$ if and only if $\lambda_v = \emptyset$, we notice that $e < v \leq w_d$ if and only if $\emptyset <\lambda_v \leq \lambda_{{w}_d}$. In this case, since $\lambda_{{w}_d}=(d,1^{d-1})$, we have the hook-shaped partition $\lambda_v=(x,1^{y})$ such that $1 \leq x \leq d$, $0 \leq y \leq d-1$. 
By Proposition~\ref{prop_classify_toric_Schubert}, this hook-shape partition $\lambda_v=(x,1^{y})$ corresponds to
\[
v={s}_{d+x-1} s_{d+x-2} \cdots s_{d+1} {s}_{d-y} s_{d-y+1}\cdots s_d
\]
for $1 \leq x \leq d$, $0 \leq y \leq d-1$,
which completes the proof by letting $a=d-x$ and $b=d-y-1$.
\end{proof}

\begin{example}\label{example_d3_subwords}
	Let $d = 3$ and $w_d = s_5s_4s_1s_2s_3$. Varying $0 \leq a \leq 2$ and $0 \leq b \leq 3$, we have the following ten permutations which are Grassmannian permutations $v$ satisfying $v \leq w_d$.
\begin{center}
	\begin{tabular}{|c||*{3}{c|}}\hline
		\backslashbox{\makebox[1.5em]{$b$}}{$a$}
		& $0$ & $1$ & $2$ \\
		\hline\hline
		$0$ & $s_5s_4s_1s_2s_3$ & $\hati{5} s_4s_1s_2s_3$ & $\hati{5} \hati{4} s_1s_2s_3$ \\ \hline 
		$1$ & $s_5s_4\hati{1}s_2s_3$ & $\hati{5}s_4\hati{1}s_2s_3$ & $\hati{5} \hati{4}\hati{1}s_2s_3$ \\ \hline 
		$2$ & $s_5s_4\hati{1}\hati{2}s_3$ & $\hati{5}s_4\hati{1}\hati{2}s_3$ & $\hati{5} \hati{4}\hati{1}\hati{2}s_3$ \\ \hline 
		$3$ &  $e$ & $e$ & $e$ \\ \hline 
	\end{tabular}
\end{center}
In the above table, we write $\hat{s}_i$'s in gray to emphasize where we omit simple reflections.
\end{example}

Let $w = s_{i_1} \cdots s_{i_m}$ be a reduced decomposition of $w \in \mathfrak{S}_n$. For $\mathscr J = \{ 1 \leq j_1 < \dots < j_k \leq m\} \subset [m]$, we denote by $w(\mathscr J)$ the corresponding subword $s_{i_{j_1}} \cdots s_{i_{j_k}}$ of $w$. In the following lemma, we consider all possible \emph{lifts} in $\mathfrak{S}_{2d}$ of Grassmannian permutations $v$ satisfying $v \leq w_d$.
Recall that we denote $[v] = \{ u \in \mathfrak{S}_n \mid u \leq w, u W_P = v W_P\}$. 
\begin{lemma}\label{lemma_lifts_of_v}
	Let $w_d$ be a permutation given in~\eqref{eq_w}.
Let $v$ be a Grassmannian permutation satisfying $v \leq w_d$, that is, $v = e$ or 
$v = s_{2d-a-1} s_{2d-a-2} \cdots s_{d+1} s_{b+1} s_{b+2} \cdots s_d$ for $0 \leq a \leq d-1$ and $0 \leq b \leq d-1$. 
Then the following statements hold. 
\begin{enumerate}
	\item If $v \neq e$, then 
\[
[v] = \{ w_d(\mathscr{J} \cup [a+1,d-1] \cup [d+b,2d-1]) \mid \mathscr{J} \subseteq [a-1] \cup [d,d+b-2]\}.
\]
\item If $v = e$, then 
\[
[v] =  \{ w_d(\mathscr{J}) \mid \mathscr{J} \subseteq [2d-2]\}.
\]
\end{enumerate}
Here, we set $[a,a+b]=\{a,a+1,\dots,a+b\}$ for some integers $a$ and $b$. Moreover, we set $[a] = \emptyset$ if $a \leq 0$;  set $[a,a+b] = \emptyset$ if $b < 0$.
\end{lemma}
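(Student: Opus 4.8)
The plan is to exploit the special structure of the reduced word in~\eqref{eq_w}. Write $w_d=s_{i_1}s_{i_2}\cdots s_{i_{2d-1}}$, so that positions $1,\dots,d-1$ carry the letters $s_{2d-1},s_{2d-2},\dots,s_{d+1}$ (position $j$ carries $s_{2d-j}$) while positions $d,\dots,2d-1$ carry $s_1,s_2,\dots,s_d$ (position $j$ carries $s_{j-d+1}$). All $2d-1$ letters are pairwise distinct, so $\mathscr J\mapsto w_d(\mathscr J)$ is a bijection from the subsets of $[2d-1]$ onto $\{u\in\mathfrak{S}_{2d}\mid u\le w_d\}$, with $w_d(\mathscr J)$ reduced of length $|\mathscr J|$ — this is the indexing already underlying Proposition~\ref{prop_fan_of_toric_in_full}. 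Matching the reduced word of $v$ from Lemma~\ref{lemma_Gr_permutation_v_leq_wd} against that of $w_d$ gives $v=w_d(\mathscr J_0)$ with $\mathscr J_0:=[a+1,d-1]\cup[d+b,2d-1]$ and $|\mathscr J_0|=\ell(v)$. For any $u\in[v]$ we have $v\le u\le w_d$, hence $u=w_d(\mathscr J')$ for a unique $\mathscr J'$, and the (unique) subword of $w_d(\mathscr J')$ realising $v$ forces $\mathscr J_0\subseteq\mathscr J'$. So it remains to decide, for $\mathscr J'\supseteq\mathscr J_0$, when $w_d(\mathscr J')^P=v$; since the set $\{w_d(\mathscr J')(1),\dots,w_d(\mathscr J')(d)\}$ is a complete invariant of the coset $w_d(\mathscr J')W_P$, this amounts to $\{w_d(\mathscr J')(1),\dots,w_d(\mathscr J')(d)\}=\{v(1),\dots,v(d)\}$.

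I would first dispose of (2). An element of $W_P=\langle s_i\mid i\ne d\rangle$ permutes $\{1,\dots,d\}$ and $\{d+1,\dots,2d\}$ separately, hence has no right descent at $d$. Since $s_d$ occurs in $w_d$ exactly once, at the last position $2d-1$, the subword $w_d(\mathscr J')$ acquires a right descent at $d$ precisely when $2d-1\in\mathscr J'$; and when $2d-1\notin\mathscr J'$ the subword uses only generators $s_i$ with $i\ne d$ and so lies in $W_P$. Hence $w_d(\mathscr J')\in W_P\iff 2d-1\notin\mathscr J'\iff\mathscr J'\subseteq[2d-2]$, which is statement (2).

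For (1), since $2d-1\in\mathscr J_0\subseteq\mathscr J'$, I would factor $u:=w_d(\mathscr J')$ as $u=q\,s_d$ with $q:=w_d(\mathscr J'\setminus\{2d-1\})\in W_P$ and $\ell(u)=\ell(q)+1$. Because $s_d=(d,d+1)$ and $q$ preserves $\{1,\dots,d\}$, one gets $\{u(1),\dots,u(d)\}=(\{1,\dots,d\}\setminus\{q(d)\})\cup\{q(d+1)\}$, so everything reduces to the two numbers $q(d)$ and $q(d+1)$. The reduced word of $q$ uses only letters from the two commuting blocks $B_{\mathrm{low}}=\{s_1,\dots,s_{d-1}\}$ and $B_{\mathrm{high}}=\{s_{d+1},\dots,s_{2d-1}\}$, so $q=q_{\mathrm{low}}q_{\mathrm{high}}$ with $q_{\mathrm{low}}\in\mathfrak{S}_{\{1,\dots,d\}}$, $q_{\mathrm{high}}\in\mathfrak{S}_{\{d+1,\dots,2d\}}$, and $q(d)=q_{\mathrm{low}}(d)$, $q(d+1)=q_{\mathrm{high}}(d+1)$. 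Then I would check the bookkeeping: among the $B_{\mathrm{low}}$-letters, those forced by $\mathscr J_0$ form the single run $s_{b+1},\dots,s_{d-1}$, the free positions $\mathscr K=[a-1]\cup[d,d+b-2]$ contribute only $s_1,\dots,s_{b-1}$, and these two are separated by the single ``gap'' letter $s_b$ (at position $d+b-1$); symmetrically, the forced $B_{\mathrm{high}}$-letters form the run $s_{d+1},\dots,s_{2d-a-1}$, the free positions contribute only $s_{2d-a+1},\dots,s_{2d-1}$, and the gap letter is $s_{2d-a}$ (at position $a$). Evaluating the corresponding products of consecutive transpositions, one finds $q_{\mathrm{low}}(d)=b+1$ and $q_{\mathrm{high}}(d+1)=2d-a$ whenever neither gap letter is used — giving $\{u(1),\dots,u(d)\}=(\{1,\dots,d\}\setminus\{b+1\})\cup\{2d-a\}=\{v(1),\dots,v(d)\}$ — whereas using $s_{2d-a}$ merges the high run with the gap and strictly increases $q_{\mathrm{high}}(d+1)$ past $2d-a$, and using $s_b$ merges the low run with the gap and strictly decreases $q_{\mathrm{low}}(d)$ below $b+1$; in either case the $d$-set is altered, so $u^P\ne v$. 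Hence $u^P=v\iff\mathscr J'\setminus\mathscr J_0\subseteq\mathscr K$, i.e.\ $u=w_d(\mathscr J\cup\mathscr J_0)$ with $\mathscr J\subseteq\mathscr K$, which is statement (1).

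The step I expect to be the main obstacle is the combinatorial bookkeeping just sketched for (1): correctly partitioning the letters of $w_d$ into forced, free, and the two ``forbidden'' gap letters relative to a given $(a,b)$, and tracking the effect of the resulting products of consecutive simple transpositions on $d$ and $d+1$, with care for the degenerate cases $a\in\{0,d-1\}$ and $b\in\{0,d-1\}$ (where a run or a gap is empty). Everything else — the bijection from subwords, the factorization $u=q\,s_d$, and the reduction to the pair $(q(d),q(d+1))$ — is formal.
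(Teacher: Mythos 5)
Your argument is correct, and it reaches the conclusion by a genuinely different route than the paper. The paper only proves the inclusion $\supseteq$ directly: it takes the word $w_d(\mathscr{J}\cup[a+1,d-1]\cup[d+b,2d-1])$ and commutes the letters coming from $\mathscr{J}$ past the reduced word of $v$ (all of them commute with what they must pass, since the indices are far apart), exhibiting the subword as $v$ times an element of $W_P$; it then upgrades all these inclusions to equalities at once by a global count, checking that $\sum_v \#[v]=2^{2d-1}$ equals the sum of the cardinalities of the proposed right-hand sides. You instead characterize $[v]$ locally and in both directions: the Boolean structure of $[e,w_d]$ forces $\mathscr{J}_0\subseteq\mathscr{J}'$, and the factorization $u=q_{\mathrm{low}}q_{\mathrm{high}}s_d$ reduces membership in the coset to the pair $\bigl(q_{\mathrm{low}}(d),q_{\mathrm{high}}(d+1)\bigr)$, which you show equals $(b+1,2d-a)$ exactly when neither of the two ``gap'' letters $s_b$, $s_{2d-a}$ is used. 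I checked the bookkeeping (position $j$ carries $s_{2d-j}$ for $j\le d-1$ and $s_{j-d+1}$ for $j\ge d$, so the gaps sit at positions $a$ and $d+b-1$) and the monotonicity claims ($q_{\mathrm{low}}(d)\le b$ if $s_b$ is used, $q_{\mathrm{high}}(d+1)\ge 2d-a+1$ if $s_{2d-a}$ is used); these hold, including the degenerate cases $a\in\{0,d-1\}$, $b\in\{0,d-1\}$, and your treatment of $[e]$ via the unique occurrence of $s_d$ at the last position is a clean shortcut. The trade-off: the paper's counting argument sidesteps your case analysis but is less transparent about \emph{why} the excluded subwords fail to lie in $[v]$, whereas your argument identifies the obstruction explicitly for each $v$ and would survive in settings where the cardinalities do not close up so conveniently, at the price of more careful bookkeeping.
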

\begin{proof}
By Lemma~\ref{lemma_Gr_permutation_v_leq_wd}, we obtain the first assertion considering reduced decompositions of Grassmannian permutations $v$ satisfying $v \leq w_d$. 
Let $v = s_{2d-a-1} s_{2d-a-2} \cdots s_{d+1} s_{b+1} s_{b+2} \cdots s_d$, a Grassmannian permutation satisfying $v \leq w_d$ for $0 \leq a \leq d-1$ and $0 \leq b \leq d$. 
We first claim that 
\begin{equation}\label{eq_vWP_words_claim}
v W_P = w_d(\mathscr{J} \cup [a+1,d-1] \cup [d+b,2d-1])W_P
\end{equation}
holds for any $\mathscr{J} \subset [a-1] \cup [d,d+b-2]$. 
To describe the permutation $w_d(\mathscr{J} \cup [a+1,d-1] \cup [d+b,2d-1])$, we display elements of the set $\mathscr{J}$ in the following form.  
\[
\begin{split}
\mathscr{J} \cap [a-1] &= \{ 1 \leq i_1<i_2<\dots<i_p\leq a-1 \},  \\
\{z-d+1 \mid z \in \mathscr{J} \cap  [d,d+b-2]\} &= \{1 \leq j_1 < j_2 < \dots < j_q \leq b-1\}.
\end{split}
\]
Since $w_d( [a+1,d-1] \cup [d+b,2d-1]) = v$, the permutation $w_d(\mathscr{J} \cup [a+1,d-1] \cup [d+b,2d-1])$ can be written by 
\[
\underbrace{s_{2d-{i_1}} s_{2d-{i_2}} \dots s_{2d-{i_p}}}_{\mathscr{J} \cap [a-1]} s_{2d-a-1} s_{2d-a-2} \cdots s_{d+1} 
\underbrace{s_{j_1} s_{j_2} \dots s_{j_q}}_{\mathscr{J}\cap[d,d+b-2]} s_{b+1} s_{b+2} \cdots s_d.
\]
Here, we emphasize where we put additional simple reflections according to the choice of $\mathscr{J}$.

We are going to move the simple reflections related to $\mathscr{J}$ from left to right. For the right-most simple reflection $s_{j_q}$, since $j_q \leq b-1$ and $s_is_j = s_js_i$ for $|i-j|>1$, we have 
\[
s_{j_q} s_{b+1} s_{b+2} \cdots s_d = s_{b+1} s_{b+2} \cdots s_d s_{j_q}.
\]
Similarly, we can move the other simple reflections $s_{j_1},\dots,s_{j_{q-1}}$ to the right, so we obtain
\[
\underbrace{s_{j_1} s_{j_2} \dots s_{j_q}}_{\mathscr{J}\cap[d,d+b-2]} s_{b+1} s_{b+2} \cdots s_d
= s_{b+1} s_{b+2} \cdots s_d \underbrace{s_{j_1} s_{j_2} \dots s_{j_q}}_{\mathscr{J}\cap[d,d+b-2]}.
\]
On the other hand, we can move the simple reflection $s_{2d-i_p}$ to the the right after $s_d$ because $2d-a+1 \leq 2d-i_p \leq 2d-1$ and $s_is_j = s_js_i$ for $|i-j| >1$. Accordingly, we obtain
\[
\begin{split}
& w_d(\mathscr{J} \cup [a+1,d-1] \cup [d+b,2d-1]) \\
&\quad = 
\underbrace{s_{2d-{i_1}} s_{2d-{i_2}} \dots s_{2d-{i_p}}}_{\mathscr{J} \cap [a-1]} s_{2d-a-1} s_{2d-a-2} \cdots s_{d+1} 
\underbrace{s_{j_1} s_{j_2} \dots s_{j_q}}_{\mathscr{J}\cap[d,d+b-2]} s_{b+1} s_{b+2} \cdots s_d \\
& \quad = \underbrace{s_{2d-{i_1}} s_{2d-{i_2}} \dots s_{2d-{i_p}}}_{\mathscr{J} \cap [a-1]} s_{2d-a-1} s_{2d-a-2} \cdots s_{d+1} 
s_{b+1} s_{b+2} \cdots s_d \underbrace{s_{j_1} s_{j_2} \dots s_{j_q}}_{\mathscr{J}\cap[d,d+b-2]} \\
& \quad =  s_{2d-a-1} s_{2d-a-2} \cdots s_{d+1} 
s_{b+1} s_{b+2} \cdots s_d \underbrace{s_{2d-{i_1}} s_{2d-{i_2}} \dots s_{2d-{i_p}}}_{\mathscr{J} \cap [a-1]} \underbrace{s_{j_1} s_{j_2} \dots s_{j_q}}_{\mathscr{J}\cap[d,d+b-2]}.
\end{split}
\]
This proves~\eqref{eq_vWP_words_claim} holds.

If $v \neq e$, that is, $b < d$, then by~\eqref{eq_vWP_words_claim}, 
we get 
\begin{equation}\label{eq_v_reduced_words_inclusion}
[v] \supset \{ w_d(\mathscr{J} \cup [a+1,d-1] \cup [d+b,2d-1]) \mid \mathscr{J} \subseteq [a-1] \cup [d,d+b-2]\}
\end{equation}
for $0 \leq a \leq d-1$ and $0 \leq b \leq d-1$.

If $v = e$, that is $b =d$, then again by~\eqref{eq_vWP_words_claim}, we obtain the inclusion 
\begin{equation}
[e] \supset \{ w_d(\mathscr{J} \cup [a+1,d-1]) \mid \mathscr{J} \subseteq [a-1] \cup [d,2d-2]\}
\end{equation}
for $0 \leq a \leq d-1$. To obtain the description in the statement, we take the union of the right-hand-side varying $0\leq a \leq d-1$. Then we get 
\begin{equation}\label{eq_reduced_decompositions_of_v}
\bigcup_{a=0}^{d-1} 
\{ w_d(\mathscr{J} \cup [a+1,d-1]) \mid \mathscr{J} \subseteq [a-1] \cup [d,2d-2]\} 
= \{ w_d(\mathscr{J}) \mid \mathscr{J} \subseteq [2d-2]\}.
\end{equation}
One can check the above equality by enumerating the elements of each set since the set on the left-hand-side is contained in the set on the right-hand-side. If $a=0$, then the number of elements of the set on the left is $2^{d-1}$; otherwise, that is $2^{a-1}\cdot 2^{d-1}$. Since 
\[
2^{d-1}\left(1+\sum_{a=1}^{d-1} 2^{a-1} \right) = 2^{d-1}(1+2^{d-1}-1) = 2^{2d-2},
\]
we obtain~\eqref{eq_reduced_decompositions_of_v}. This proves 
\begin{equation}\label{eq_e_reduced_words_inclusion}
[e] \supset \{ w_d(\mathscr{J}) \mid \mathscr{J} \subseteq [2d-2]\}.
\end{equation}

To prove that the desired equality in the statement holds, that is, two sets in~\eqref{eq_v_reduced_words_inclusion} and in~\eqref{eq_e_reduced_words_inclusion} are the same, we are going to enumerate the numbers of elements of sets appearing in~\eqref{eq_v_reduced_words_inclusion} and~\eqref{eq_e_reduced_words_inclusion}. Because a reduced decomposition of $w_d$ consists of \emph{distinct} simple reflections (see~\eqref{eq_w}), any two subwords $w_d(\mathscr{I})$ and $w_d(\mathscr{J})$ of $w_d$ are different for different subsets $\mathscr{I}, \mathscr{J}\subset [2d-1]$. Accordingly, we have 
\[
\sum_{v \leq w_d} \# [v] = 
\sum_{v \leq w_d} \# \{ u \in \mathfrak{S}_n \mid u \leq w_d, u W_P = v W_P\}
= \# \{ u \in \mathfrak{S}_{n} \mid u \leq w_d\} = 2^{2d-1}
\]
since the number of subsets of $[2d-1]$ is $2^{2d-1}$. Here, we denote by $\# A$ the cardinality of a set $A$. 

To enumerate the number of elements in the set $\{ w_d(\mathscr{J} \cup [a+1,d-1] \cup [d+b,2d-1]) \mid \mathscr{J} \subseteq [a-1] \cup [d,d+b-2]\}$, it is enough to consider the number of subsets $\mathscr{J} \subset [a-1] \cup [d, d+b-2]\}$ since $w_d$ consists of distinct simple reflections. In the following table, we display the sets $[a-1] \cup [d,d+b-2]$ by varying numbers $a$ and $b$. 
\begin{center}
	\renewcommand*{\arraystretch}{1.2}
\begin{tabular}{|c||*{2}{c|}}\hline
	\backslashbox{\makebox[1.5em]{$b$}}{$a$}
	& $0$ & $1 \leq a \leq d-1$ \\
	\hline\hline
	$0$ & $\emptyset$ & $[a-1]$\\ \hline
	$1 \leq b \leq d$ & $[d,d+b-2]$ & $[a-1] \cup [d,d+b-2]$\\
	\hline 
\end{tabular}
\end{center}
Therefore, we get 
\[
\begin{split}
&\sum_{a=0}^{d-1} \sum_{b=0}^d \# (\{ \mathscr{J} \mid \mathscr{J} \subset [a-1] \cup [d,d+b-2]\}) \\
&\quad = 1 + 
\sum_{a=1}^{d-1} \# (\{ \mathscr{J} \mid \mathscr{J} \subset [a-1] \}) 
+ \sum_{b=1}^{d} \# (\{ \mathscr{J} \mid \mathscr{J} \subset [d, d+b-2] \}) \\
& \quad \quad \quad \quad + \sum_{a=1}^{d-1} \sum_{b=1}^d \# (\{ \mathscr{J} \mid \mathscr{J} \subset [a-1] \cup [d,d+b-2]\})\\
& \quad = 1 + (2^{d-1}-1) + (2^{d}-1) + (2^{d-1}-1)(2^d-1) \\
& \quad = 2^{2d-1}.
\end{split}
\]
This proves that the inclusions in~\eqref{eq_v_reduced_words_inclusion} and~\eqref{eq_e_reduced_words_inclusion} are indeed the equalities because they have the same numbers of elements, and the result follows.
\end{proof}

\begin{proposition}
	Let $d$ be a positive integer and let $w=w_d$ be the permutation in~\eqref{eq_w}.
Denote by $\mathbf{v}_1,\dots,\mathbf{v}_{2d-1}, \mathbf{w}_1,\dots,\mathbf{w}_{2d-1}$ the primitive ray generators of the fan of the toric Schubert variety~$X_{w}^B$. For $v \leq w$, the corresponding maximal cone $C_v$ in the fan of~$X_{w}$ in $\Gr(d,2d)$ is given as follows. 
\begin{enumerate}
	\item If $v = s_{2d-a-1} s_{2d-a-2} \cdots s_{d+1} s_1 \cdots s_d$ for $1 \leq a \leq d-1$, then 
	\[
	C_v = \Cone \left(\{\mathbf{v}_1\} \cup \left\{\mathbf{w}_i \mid i \in [2d-1] \setminus \{a\} \right\} \right). 
	\]
	\item If $v = s_{2d-1} s_{2d-2} \cdots s_{d+1} s_1 \cdots s_{b+1} s_{b+2} \cdots s_d$ for $1 \leq b \leq d-1$, then 
	\[
	C_v = \Cone \left( 
	\{\mathbf{v}_d\} \cup 
	\left\{ \mathbf{w}_i \mid i \in [2d-1] \setminus\{d+b-1\} \right\}
	\right).
	\]
	\item If $v = s_{2d-a-1} s_{2d-a-2} \cdots s_{d+1} s_{b+1} s_{b+2} \cdots s_d$ for $1 \leq a \leq d-1$ and $1 \leq b \leq d-1$, then 
	\[
	C_v = \Cone \left(
	\{ \mathbf{v}_1, \mathbf{v}_d \} \cup 
	\left\{
	\mathbf{w}_i \mid i \in [2d-1] \setminus \{a, d+b-1\}
	\right\}
	\right).
	\]	
	\item If $v = w$, then 
\[
 C_w = \Cone(\mathbf{w}_i \mid i \in [2d-1]).
\]
\item If $v = e$, then 
\[
 C_e = \Cone(\{\mathbf{v}_1, \mathbf{v}_d\} \cup \{\mathbf{w}_i \mid i \in [2d-2]\}).
\]
\end{enumerate}
Accordingly, for the fan $\Sigma_{w}$, the set of ray generators consists of the following $2d+1$ vectors.
\[
\{\mathbf{v}_1, \mathbf{v}_d, \mathbf{w}_1,\dots,\mathbf{w}_{2d-1}\}.
\]
\end{proposition}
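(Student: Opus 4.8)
The plan is to assemble three facts already established: Lemma~\ref{lemma_lifts_of_v}, which describes $[v]$ for every Grassmannian permutation $v \le w_d$ as an explicit family of subwords $w_d(\mathscr{S})$; Proposition~\ref{prop_fan_of_toric_in_full}, which identifies $C^B_{w_d(\mathscr{S})} = \Cone(\{\mathbf{v}_i \mid i \notin \mathscr{S}\} \cup \{\mathbf{w}_i \mid i \in \mathscr{S}\})$; and Proposition~\ref{prop_fan_in_Gr}, by which $C_v = \bigcup_{u \in [v]} C^B_u$. The first step is to make the ray vectors explicit. Writing the reduced word in~\eqref{eq_w} as $s_{i_1} \cdots s_{i_{2d-1}}$, so that $(i_1, \dots, i_{2d-1}) = (2d-1, 2d-2, \dots, d+1, 1, 2, \dots, d)$, and substituting the type $A$ Cartan integers into~\eqref{equation_ray_vectors}, one computes directly that $\mathbf{v}_j = e_j$ for all $j$ and
\[
\mathbf{w}_k = -e_k + e_{k+1} \quad (k \in [1,d-2] \cup [d,2d-2]), \qquad \mathbf{w}_{d-1} = -e_{d-1} + e_{2d-1}, \qquad \mathbf{w}_{2d-1} = -e_{2d-1},
\]
where $e_1, \dots, e_{2d-1}$ is the standard basis of $\R^{2d-1}$. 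The consequence that does the real work afterwards is the pair of telescoping identities
\[
e_j = \mathbf{v}_1 + \mathbf{w}_1 + \cdots + \mathbf{w}_{j-1} \quad (1 \le j \le d-1), \qquad e_j = \mathbf{v}_d + \mathbf{w}_d + \cdots + \mathbf{w}_{j-1} \quad (d \le j \le 2d-1),
\]
which express each standard basis vector as a nonnegative combination of a tightly controlled set of ray vectors.

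Next I would treat case (3) as the generic situation and read the others off as its degenerations. For $v = s_{2d-a-1} \cdots s_{d+1} s_{b+1} \cdots s_d$ with $1 \le a, b \le d-1$, Lemma~\ref{lemma_lifts_of_v}(1) gives $[v] = \{ w_d(\mathscr{S}) \}$ where $\mathscr{S} = \mathscr{J} \cup [a+1,d-1] \cup [d+b,2d-1]$ and $\mathscr{J}$ runs over all subsets of the two \emph{variable blocks} $[a-1] \cup [d,d+b-2]$; the always-present indices of $\mathscr{S}$ are $[a+1,d-1] \cup [d+b,2d-1]$ and the only always-absent indices are $a$ and $d+b-1$. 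Hence, as $\mathscr{J}$ varies, the ray vectors appearing as generators of the cones $C^B_{w_d(\mathscr{S})}$ are: $\mathbf{w}_i$ for $i$ in the always-present part; both $\mathbf{v}_i$ and $\mathbf{w}_i$ for $i$ in a variable block; and $\mathbf{v}_a$, $\mathbf{v}_{d+b-1}$. Cases (1) and (2) are the subcases $b = 0$ and $a = 0$ (one variable block disappears and only one of $\mathbf{v}_a$, $\mathbf{v}_{d+b-1}$ survives); case (4) is $a = b = 0$, where $[w_d] = \{w_d\}$ and $C_{w_d} = C^B_{w_d} = \Cone(\mathbf{w}_1, \dots, \mathbf{w}_{2d-1})$; and case (5) uses Lemma~\ref{lemma_lifts_of_v}(2), so $\mathscr{S}$ runs over all subsets of $[2d-2]$, with always-absent index $2d-1$.

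To conclude, in each case I would prove $C_v = \Cone(G_v)$ for the asserted generating set $G_v$ by two inclusions, always using $C_v = \bigcup_{\mathscr{S}} C^B_{w_d(\mathscr{S})}$. For $C_v \subseteq \Cone(G_v)$ it suffices to show every ray vector listed above lies in $\Cone(G_v)$: the occurring $\mathbf{w}_i$ all belong to $G_v$ by inspection, while each occurring $\mathbf{v}_i$ is rewritten by the telescoping identities as a nonnegative combination of $\mathbf{v}_1$ or $\mathbf{v}_d$ (both in $G_v$) and of $\mathbf{w}_k$'s whose indices avoid the distinguished omitted indices (such as $a$, $d+b-1$), hence of elements of $G_v$. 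For $\Cone(G_v) \subseteq C_v$ I would use that $C_v$, being a cone of the fan of $X_{w_d}$ by Proposition~\ref{prop_fan_in_Gr}, is convex; it therefore suffices to place each generator of $G_v$ inside $C_v$, which is immediate — $\mathbf{v}_1$ (resp. $\mathbf{v}_d$) either coincides with the always-present generator $\mathbf{v}_a$ (resp. $\mathbf{v}_{d+b-1}$) when the corresponding variable block is empty or is a generator of $C^B_{w_d(\mathscr{S})}$ for $\mathscr{J}$ chosen to miss that index, and each $\mathbf{w}_i \in G_v$ is a generator of $C^B_{w_d(\mathscr{S})}$ for $\mathscr{J}$ chosen to contain $i$ (or automatically, when $i$ is always present). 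Finally, collecting the generators that occur across (1)--(5) gives exactly $\{\mathbf{v}_1, \mathbf{v}_d, \mathbf{w}_1, \dots, \mathbf{w}_{2d-1}\}$; each $\mathbf{v}_i$ with $i \notin \{1,d\}$ was shown to be a nonnegative combination of at least two others and so is never an extremal ray, whereas $\mathbf{v}_1$, $\mathbf{v}_d$ and every $\mathbf{w}_i$ occur as extremal rays of maximal cones (the $\mathbf{w}_i$, for instance, span the simplicial cone $C_{w_d}$).

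I do not anticipate any conceptual obstacle; the argument is bookkeeping. The point requiring care is matching the two variable index blocks $[a-1]$ and $[d,d+b-2]$ of Lemma~\ref{lemma_lifts_of_v}, along with their degenerations ($a \le 1$, $b \le 1$, and $v = e$), against the prescribed sets $G_v$ uniformly across the five cases. The small but essential observation that keeps the second inclusion painless is that convexity of $C_v$ is free from Proposition~\ref{prop_fan_in_Gr}, so only the finitely many generators of $G_v$, not arbitrary points, need to be located inside $C_v$.
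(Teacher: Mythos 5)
Your proposal is correct and follows essentially the same route as the paper: combine Lemma~\ref{lemma_lifts_of_v}, Proposition~\ref{prop_fan_of_toric_in_full}, and Proposition~\ref{prop_fan_in_Gr} to write $C_v$ as a union of cones $C^B_{w_d(\mathscr{S})}$, then use the linear relations $\mathbf{v}_i+\mathbf{w}_i=\mathbf{v}_{i+1}$ (your telescoping identities in coordinates) to discard the redundant generators $\mathbf{v}_i$ with $i\notin\{1,d\}$. The one place you are more explicit than the paper is in justifying the passage from the union of cones to the single cone on all occurring generators via the convexity of $C_v$ guaranteed by Proposition~\ref{prop_fan_in_Gr}, a step the paper's computation leaves implicit.
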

\begin{proof}
By applying Proposition~\ref{prop_fan_of_toric_in_full} to $w=w_d$, we obtain the matrix representing ray generators $\mathbf{w}_1,\dots,\mathbf{w}_{2d-1}$ of the fan of the Schubert variety $X_{w}^B$ in $\flag(\C^{2d})$.
\begin{equation}
[\mathbf{w}_1 \ \cdots \ \mathbf{w}_{2d-1}] = 
	\begin{tikzpicture}[
		baseline,
		label distance=10pt 
		]

		\matrix [matrix of math nodes,left delimiter=(,right delimiter=),row sep=0.1cm,column sep=0.1cm] (m) {
			-1 & 0      & \dots & 0   & 0 & 0& \dots & 0 \\
			1 & -1 &   \dots & 0  & 0 & 0 & \dots & 0 \\
			\vdots & \ddots & \ddots & \vdots & \vdots & \vdots & \ddots & \vdots\\
			0 &     \dots & 1  & -1& 0 & 0 & \dots & 0 \\
			0 &   0    &  \dots   & 0  & -1 & 0& \dots & 0 \\
			0 &   0    &  \dots   & 0  & 1 & -1 &\dots & 0 \\
			\vdots  & \vdots & \ddots & \vdots & \vdots &\ddots & \ddots & \vdots\\
			0    & 0    &   0   & 1   & 0 & \dots & 1 & -1 \\ };
		
		\draw[dashed] ($0.5*(m-1-4.north east)+0.5*(m-1-5.north west)$) -- ($0.5*(m-8-5.south east)+0.5*(m-8-4.south west)$);
		
		\draw[dashed] ($0.5*(m-4-1.south west)+0.5*(m-5-1.north west)$) -- ($0.5*(m-4-8.south east)+0.5*(m-5-8.north east)$);
		
		\node[
		fit=(m-1-1)(m-1-4),
		inner xsep=0,
		above delimiter=\{,
		label=above:$d-1$
		] {};
		
		\node[
		fit=(m-1-5)(m-1-8),
		inner xsep=0,
		above delimiter=\{,
		label=above:$d$
		] {};
		
		\node[
		fit=(m-1-8)(m-4-8),
		inner xsep=15pt,inner ysep=0,
		right delimiter=\},
		label=right:$d-1$
		] {};
		
		\node[
		fit=(m-5-8)(m-8-8),
		inner xsep=15pt,inner ysep=0,
		right delimiter=\},
		label=right:$d$
		] {};
		
	\end{tikzpicture} 
\end{equation}
From this matrix representation, let $1 \leq i\leq 2d-1$, then we have the following relations.
\begin{eqnarray}
\label{eq_rel_ray1} 
\mathbf{v}_i+\mathbf{w}_i&=&\mathbf{v}_{i+1} \quad \text{ for } i\neq d-1, 2d-1; \\
\label{eq_rel_ray2} 
\mathbf{v}_{d-1}+\mathbf{w}_{d-1}&=&\mathbf{v}_{2d-1}; \\
\label{eq_rel_ray3}
\mathbf{v}_{2d-1}+\mathbf{w}_{2d-1}&=&\mathbf{0}. 
\end{eqnarray}

For $0 \leq a \leq d-1$ and $0 \leq b \leq d-1$, consider $v = s_{2d-a-1} s_{2d-a-2} \cdots s_{d+1} s_{b+1} s_{b+2} \cdots s_d$, which is a Grassmannian permutation less than or equal to $w$. 
Then we have 
\begin{equation}\label{eq_Cv_general_form}
	\begin{split}
	C_v &= \bigcup_{u \in [v]} C_u^{B} \quad \text{(by Proposition~\ref{prop_fan_in_Gr})} \\
	=&\bigcup_{\mathscr{J} \subseteq [a-1] \cup [d,d+b-2]} C_{w(\mathscr{J} \cup [a+1,d-1]\cup [d+b,2d-1])}^B \quad \text{(by Lemma~\ref{lemma_lifts_of_v})} \\    
	=& \bigcup_{\mathscr{J} \subseteq [a-1] \cup [d,d+b-2]}   \Cone \big( 
	\{\mathbf{v}_i \mid i \notin \mathscr{J} \cup [a+1,d-1]\cup [d+b,2d-1]\} \\
	& \qquad  \qquad \qquad\qquad \quad \qquad \cup \{\mathbf{w}_i \mid i \in \mathscr{J} \cup [a+1,d-1]\cup [d+b,2d-1] \}
	\big)  \quad \text{(by Proposition~\ref{prop_fan_of_toric_in_full})} \\
	&= \Cone \big(\{\mathbf{v}_i \mid i \in [a] \cup [d, d+b-1]  \} \\
	& \qquad \qquad \quad \cup \{\mathbf{w}_i \mid i \in [a-1] \cup [a+1,d-1] \cup [d,d+b-2] \cup [d+b, 2d-1]\} \big).
	\end{split}
\end{equation}

\textsf{Case 1.} Suppose that $v = s_{2d-a-1} s_{2d-a-2} \cdots s_{d+1} s_1 \cdots s_d$ for $1 \leq a \leq d-1$, that is, $b=0$. 
By applying $b=0$ to the equation~\eqref{eq_Cv_general_form}, we obtain
\[
C_v = \Cone \left( 
	\{ \mathbf{v}_i \mid i \in [a]  \} \cup 
	\{\mathbf{w}_i \mid i \in [a-1] \cup [a+1,2d-1]\}
	\right).
\]
Using the relation~\eqref{eq_rel_ray1}, we get 
\[
C_v = \Cone \left(\{\mathbf{v}_1\} \cup \left\{\mathbf{w}_i \mid i \in [2d-1] \setminus \{a\} \right\} \right).
\]
This proves the claim when $b=0$.

\medskip 
\textsf{Case 2.} Suppose that $v = s_{2d-1} s_{2d-2} \cdots s_{d+1} s_1 s_{b+1}s_{b+2} \cdots s_d$ for $1 \leq b \leq d-1$, that is, $a = 0$. 
By applying $a = 0$ to the equation~\eqref{eq_Cv_general_form}, we obtain
\[
C_v = \Cone \big(\{\mathbf{v}_i \mid i \in [d, d+b-1]  \} \cup \{\mathbf{w}_i \mid i \in [1,d+b-2] \cup [d+b, 2d-1]\} \big).
\]
Using the equation~\eqref{eq_rel_ray1}, we have
\[
    C_v = \Cone \left( 
	\{\mathbf{v}_{d}\} \cup 
	\left\{ \mathbf{w}_i \mid i \in [2d-1] \setminus\{d+b-1\} \right\}\right).
\]
This proves the claim when $a =0$.

\medskip 
\textsf{Case 3.} Suppose that  $v = s_{2d-a-1} s_{2d-a-2} \cdots s_{d+1} s_{b+1} s_{b+2} \cdots s_d$ for $1 \leq a \leq d-1$ and $1 \leq b \leq d-1$. By~\eqref{eq_Cv_general_form}, we get
\[
C_v = \Cone \left(\{\mathbf{v}_i \mid i \in [a] \cup [d,d+b-1] \} \cup \left\{\mathbf{w}_i \mid i \in [2d-1] \setminus \{a, d+b-1\} \right\} \right).
\] 
Using the equation~\eqref{eq_rel_ray1}, we have 
\[
    C_v = \Cone \left(
	\{ \mathbf{v}_1, \mathbf{v}_{d} \} \cup 
	\left\{
	\mathbf{w}_i \mid i \in [2d-1] \setminus \{a, d+b-1\}
	\right\}\right).
\]
This proves the claim for $1 \leq a \leq d-1$ and $1 \leq b \leq d$.

\medskip 
\textsf{Case 4.} If $v = w$, then $[w] = \{w\}$. Hence, by Propositions~\ref{prop_fan_of_toric_in_full} and~\ref{prop_fan_in_Gr}, we have
\[
 C_w = \Cone(\{\mathbf{w}_i \mid i \in [2d-1]\}).
\]

\medskip 

\textsf{Case 5.} If $v = e$, then by Propositions~\ref{prop_fan_of_toric_in_full} and~\ref{prop_fan_in_Gr} again, and Lemma~\ref{lemma_lifts_of_v}(2), we obtain
\[
C_e = \bigcup_{\mathscr{J} \subseteq [2d-2]}
\Cone \left( 
\{\mathbf{v}_i \mid i \notin \mathscr{J}\} \cup \{\mathbf{w}_i \mid i \in \mathscr{J}\} \right) = \Cone \left( \{\mathbf{v}_{i} \mid i \in [2d-2]\} \cup \left\{ \mathbf{w}_i \mid i \in [2d-2] \right\}\right).
\]
By the equations~\eqref{eq_rel_ray1}, 
we have
\begin{eqnarray*}
C_e = \Cone \left( 
	\{\mathbf{v}_{1}, \mathbf{v}_{d}\} \cup 
	\left\{ \mathbf{w}_i \mid i \in [2d-2] \right\}\right).
\end{eqnarray*}
Hence the result follows.
\end{proof}

Now we are ready to prove Theorem~\ref{thm_intro_Fano}. 
\begin{proof}[Proof of Theorem~\ref{thm_intro_Fano}]
Let $X_w$ be a toric Schubert variety in a Grassmannian $\Gr(k,n)$. If $X_w$ is smooth, then is isomorphic to the complex projective space by Corollary~\ref{cor_smooth_toric_Schubert}. Accordingly, it is Gorenstein Fano. 

Suppose that $X_w$ is not smooth but Gorenstein. Then, by Lemma~\ref{prop_toric_Gorenstein} (and also as discussed in Section~\ref{section_Gorenstein_toric_Schubert}), we may assume that $k=d$, $n = 2d$, and $w = w_d$ in~\eqref{eq_w}. 

Let $\Sigma$ be the fan of the toric Schubert variety $X_{w_d}$ in $\Gr(d,2d)$.	
Let $D = - \sum_{\rho} D_{\rho}$ be the anticanonical divisor. Since we are considering Gorenstein toric Schubert variety, we have the Cartier data $\{m_{\sigma}\}$ for the divisor $D$, that is, for each maximal cone $\sigma$, the Cartier data $\{m_{\sigma}\}$ satisfies 
\begin{equation}\label{eq_Cartier_data}
	\langle m_{\sigma}, u_{\rho} \rangle = -1 \quad \text{ for any } \rho \in \sigma(1).
\end{equation}

To prove the ampleness of $D$, it is enough to prove the following
\begin{equation}\label{eq_ampleness}
\langle m_{\sigma}, u_{\rho} \rangle > -1 \quad \text{ for all } \rho \in \Sigma(1) \setminus \sigma(1) \text{ and } \sigma \in \Sigma(n)
\end{equation}
(see~\cite[Lemma~6.1.13 and Theorem~6.1.14]{CLS11Toric}).
By~\eqref{eq_Cartier_data}, it is enough to prove that for $\rho \in \Sigma(1) \setminus \sigma(1)$ and $\sigma \in \Sigma(n)$, each $u_{\rho}$ can be expressed as the negative sum of the ray generators in $\sigma(1)$ to obtain the inequality~\eqref{eq_ampleness}. 
We prove that the inequality in~\eqref{eq_ampleness} holds by considering
each maximal cone in $\Sigma$. 

\textsf{Case 1.} If $v =s_{2d-a-1} s_{2d-a-2} \cdots s_{d+1} s_1 \cdots s_d$ for $1 \leq a \leq d-1$, then $\Sigma(1) \setminus C_v(1) = \{\mathbf{v}_d, \mathbf{w}_a\}$. 
	Then we obtain the desired expressions as follows.
	\begin{eqnarray}
		\mathbf{v}_d &=& -(\mathbf{w}_d + \mathbf{w}_{d+1} + \cdots + \mathbf{w}_{2d-1}), \label{eq_vd}\\
\mathbf{w}_a &=& -(\mathbf{v}_1 + \mathbf{w}_1 + \dots + \mathbf{w}_{a-1} + \mathbf{w}_{a+1} + \dots + \mathbf{w}_{d-1}+\mathbf{w}_{2d-1}).  \label{eq_wa}
	\end{eqnarray}

\medskip 
\textsf{Case 2.} If $v = s_{2d-1} s_{2d-2} \cdots \cdots s_{d+1} s_{b+1} s_{b+2} \cdots s_d$ for $1 \leq b \leq d-1$, then $\Sigma(1) \setminus C_v(1) = \{\mathbf{v}_1, \mathbf{w}_{d+b-1}\}$. Then we obtain
\begin{eqnarray}
	\mathbf{v}_1 &=& -(\mathbf{w}_1 +\dots + \mathbf{w}_{d-1} + \mathbf{w}_{2d-1}), \label{eq_v1}\\
	\mathbf{w}_{d+b-1} &=& -(\mathbf{v}_d+\mathbf{w}_d+\dots+\mathbf{w}_{d+b-2}+\mathbf{w}_{d+b}+\dots+\mathbf{w}_{2d-1}). \label{eq_w_db1}
\end{eqnarray}

\medskip 
\textsf{Case 3.} If $v = s_{2d-a-1} s_{2d-a-2} \cdots s_{d+1} s_{b+1} s_{b+2} \cdots s_d$ for $1 \leq a \leq d-1$ and $1 \leq b \leq d-1$, then $\Sigma(1) \setminus C_v(1) = \{\mathbf{w}_a, \mathbf{w}_{d+b-1}\}$. Then we can use the expressions in~\eqref{eq_wa} and~\eqref{eq_w_db1}. 

\medskip 
\textsf{Case 4.} If $v = w$, then $\Sigma(1) \setminus C_v(1) = \{\mathbf{v}_1, \mathbf{v}_d\}$. Then we can use the expressions in~\eqref{eq_vd} and~\eqref{eq_v1}. 

\medskip 
\textsf{Case 5.} If $v = e$, then $\Sigma(1) \setminus C_v(1) = \{\mathbf{w}_{2d-1}\}$ and we have  $\mathbf{w}_{2d-1} = -\mathbf{v}_1 - \mathbf{w}_1 - \cdots - \mathbf{w}_{d-2} - \mathbf{w}_{d-1}$, which completes the proof. 
\end{proof}

\end{document}